
\documentclass{amsproc}
\usepackage{eurosym}
\usepackage{amssymb}
\usepackage{amsfonts}
\usepackage{cmtt}
\usepackage{tikz}
\usepackage[all,cmtip]{xy}

\setcounter{MaxMatrixCols}{10}
%TCIDATA{OutputFilter=LATEX.DLL}
%TCIDATA{Version=5.00.0.2552}
%TCIDATA{<META NAME="SaveForMode" CONTENT="3">}
%TCIDATA{Created=Sunday, October 14, 2007 18:09:13}
%TCIDATA{LastRevised=Monday, March 05, 2012 00:05:51}
%TCIDATA{<META NAME="GraphicsSave" CONTENT="32">}
%TCIDATA{<META NAME="DocumentShell" CONTENT="Articles\SW\AMS Proceedings Article">}
%TCIDATA{Language=American English}
%TCIDATA{CSTFile=amsprtci.cst}

\theoremstyle{plain}

\newtheorem{theorem}{Theorem}[section]
\newtheorem{question}[theorem]{Question}

\newtheorem{definition}[theorem]{Definition}

\newtheorem{lemma}[theorem]{Lemma}

\newtheorem{proposition}[theorem]{Proposition}
\newtheorem{remark}[theorem]{Remark}

\numberwithin{equation}{section}

\begin{document}

\title[Cox rings]{Cox rings of du Val singularities}

\author{Laura Facchini} %[Facchini]
\address{
Department of Mathematics\\
University of Trento\\
Via Sommarive 14\\
38123 Povo (TN), Italy}
\email{laura.facchini@unitn.it}

\author{V\'\i ctor Gonz\'alez-Alonso} %[Gonz\'alez]
\address{
Department of Applied Mathematics I\\
Universitat Polit\`ecnica de Catalunya\\
Av. Diagonal 647\\
08028 Barcelona, Spain}
\email{victor.gonzalez-alonso@upc.edu}

\author{Micha{\l} Laso\'{n}} %[Laso\'{n}]
\address{
Institute of Mathematics of the Polish Academy of Sciences\\
\'{S}niadeckich 8, 00-956 Warszawa, Poland\\
Theoretical Computer Science Department\\
Faculty of Mathematics and Computer Science\\
Jagiellonian University, S. \L ojasiewicza 6, 30-348 Krak\'{o}w, Poland}
\email{michalason@gmail.com}

\subjclass{14B05,14N10,14J17,14E99}

\keywords{Cox ring, du Val singularities}

\thanks{The second author completed this work supported by grant AP2008-01849 from the Spanish Ministerio de Educaci\'on.}
\thanks{The third author is supported by the grant of Polish MNiSzW N N201 413139.} 

\begin{abstract}
In this note we introduce Cox rings of singularities and explicitly compute them in the case of du Val singularities $\mathbb{D}_n,\mathbb{E}_6,\mathbb{E}_7$ and $\mathbb{E}_8$.
\end{abstract}

\maketitle

\section{Introduction}

In the study of the intrinsic geometry of projective varieties it is very useful to consider all of its possible projective embeddings. This leads, for example, to the study of the Picard group of the variety, considering all possible line bundles on it. However, on the other hand we lose the concept of {\em coordinate ring}. In the case of toric varieties, this problem was solved by Cox \cite{Cox} by considering the {\em total coordinate ring} of the variety. Cox's construction was generalized by Hu and Keel in \cite{HuKeel} for varieties with free, finitely generated Picard group, and they called this ring the {\em Cox ring} of the variety.

The Cox ring of a variety is closely related to its birational geometry, specially to its various GIT presentations and small modifications. For example, if the Cox ring of a variety is finitely generated, then Mori's Minimal Model Program can be carried out for any divisor (hence the name {\em Mori Dream Space} for these varieties). Some examples of varieties, except toric ones, whose Cox ring has been explicitly computed are homogeneous varieties, Del Pezzo surfaces and some blow-ups of projective spaces in finitely many points (see \cite{LafaVela}).

In this note, we generalize this construction to study resolutions of surface singularities. More precisely, we define the Cox ring of a surface singularity under some hypothesis on the relative Picard group of its desingularization, and then compute explicitly the Cox ring of du Val singularities. We focus on these singularities because the Picard group of their desingularizations is easy to describe, so many computations can be done explicitly. Moreover, since they are the most basic surface singularities, they constitute a natural starting point for computing Cox rings of other singularities. For a short introduction about these singularities and its basic properties, the reader is referred to \cite{BHPVV}, sections III.3 to III.7.

The paper is organized as follows. First, in section 2, we introduce Cox rings by stating our definition and summarizing some general properties. In section 3 we compute the case of the $\mathbb{A}$-type singularities, which are toric varieties and hence its Cox ring is known. After that, in section 4, using the results from section 3 as a guide, we compute the Cox ring of the $\mathbb{D}_n$ singularities. This is the longest section in the paper, and contains full and detailed proofs of all the intermediate steps. Finally, in section 5, we state the basic results in the case of the three $\mathbb{E}$-type singularities. We have omitted the proof of these results because they are very similar to the proofs from section 4 and they would make the paper much longer (each of the three cases need specific proofs at some point).

{\em Note:} All the varieties considered in the paper are defined over $\mathbb{C}$.

\section{Generalities on Cox rings.}

Our aim is to compute the Cox ring of all du Val surface singularities. First of all, we need to define such a ring, and we do it for any (normal) surface singularity. 
\begin{definition} \label{def-Cox}
Let $(X,O)$ be a normal surface singularity, i.e., $X$ is a normal surface whose only singular point is $O$, and let $\pi: \widetilde{X} \longrightarrow X$ be its minimal desingularization. Assume moreover that the relative Picard group of $\pi$, ${\rm Pic}(\widetilde{X}/X) = {\rm Pic}(\widetilde{X})/\pi^*{\rm Pic}(X)$ is free and finitely generated. We define the {\em Cox ring} of the singularity as the ring $${\rm Cox}(X) = \bigoplus_{L \in {\rm Pic}(\widetilde{X}/X)} H^0(\widetilde{X},L).$$
\end{definition}
%\comment{If we assume $X$ to be affine, do we need the Picard group of the base???}

\begin{remark}
${\rm Cox}(X)$ is naturally graded by ${\rm Pic}(\widetilde{X}/X)$. Its multiplicative structure depends on the choice of line bundles representing isomorphism classes $L \in {\rm Pic}(\widetilde{X})$, but two different choices give (non-canonically) isomorphic rings.
\end{remark}

In our case, du Val singularities are rational, which implies that $${\rm Pic}(\widetilde{X}) \cong \pi^*{\rm Pic}(X) \oplus H^2(E,\mathbb{Z})$$ where $E = \pi^{-1}(O)$ is the fibre over the singular point. Moreover, $E = \bigcup_{i=1}^n E_ i$ is a connected union of (-2)-curves $E_i$ (i.e. each $E_i$ is a smooth  rational curve with self-intersection $(E_i)^2 = -2$) intersecting transversely at one point at most (i.e. $E_i \cdot E_j = 0,1$). Therefore, we get the following chain of isomorphisms $${\rm Pic}(\widetilde{X}/X) \cong H^2(E,\mathbb{Z}) \cong \bigoplus_{i=1}^n H^2(E_i,\mathbb{Z}) \cong \bigoplus_{i=1}^n \mathbb{Z}\langle c_1(\mathcal{O}_{E_i}(1))\rangle \cong \mathbb{Z}^n.$$ We call the composition $\delta: {\rm Pic}(\widetilde{X}/X) \longrightarrow \mathbb{Z}^n$ the {\em relative (multi)degree}, because its components are given by $\delta_i(L) = \deg(L_{|E_i})$ for any $L \in {\rm Pic}(\widetilde{X})$. Furthermore, any (isomorphism class of a) relative line bundle $L \in {\rm Pic}(\widetilde{X}/X)$ is uniquely determined by its multidegree $\delta(L)$. This nice description of the relative Picard group is the reason to focus on du Val (rational) surface singularities.

Before going deeper into the study of Cox rings of du Val singularities, we would like to remind some general properties of Cox rings (suitably adapted to our case), mainly those related with GIT and birational models. We are following \cite{LafaVela}, where the interested reader could find more detailed explanations.

First of all, ${\rm Cox}(X)$ is endowed with a natural action of the algebraic torus $T_X = {\rm Hom}({\rm Pic}(\widetilde{X}/X),\mathbb{C}^*)$. Explicitly, if $L_1,\ldots,L_r$ is a basis of ${\rm Pic}(\widetilde{X}/X) \cong \mathbb{Z}^r$ so that $T_X \cong \mathbb{C}^{*r}$, and $x \in H^0(\widetilde{X},L)$, with $L \cong L_1^{a_1}\otimes\cdots\otimes L_r^{a_r}$, then the action is given by $$(t_1,\ldots,t_r)\cdot x = t_1^{a_1}\cdots t_r^{a_r} x.$$

This action extends to an action on the affine scheme $\overline{X} = {\rm Spec}({\rm Cox}(X))$. Therefore, whenever ${\rm Cox}(X)$ is finitely generated, we can use it to do GIT. So, from now on we will assume that ${\rm Cox}(X)$ is finitely generated. Let $L$ be a line bundle on $\widetilde{X}$ and set $$R_L = \bigoplus_{m=0}^{\infty} {\rm Cox}(X)_{L^m} = \bigoplus_{m=0}^{\infty} H^0(\widetilde{X},L^m).$$ The inclusion $R_L \subseteq {\rm Cox}(X)$ induces a rational map $$\pi_L: \overline{X} \dashrightarrow X_L = {\rm Proj}(R_L)$$ which is constant on $T_X$-orbits. In particular, if we take $L$ to be a very ample line bundle on $\widetilde{X}$ then $X_L\cong\widetilde{X}$. If we take $L$ to be the pull-back of a very ample line bundle on $X$, instead, we recover $X_L \cong X$. Therefore, quotients of $\overline{X}$ give some insight into the geometry of $X$.

As explained in \cite{LafaVela}, $L$ induces a linearization of the trivial bundle on $\overline{X}$ extending the action of $T_X$, and the GIT quotient of $\overline{X}$ via this linearization is precisely $X_L$. This quotient is a good geometric quotient of the set of semistable points of $\overline{X}$, whose complement is the subvariety associated to the {\em irrelevant ideal} associated to $L$: $$J_L=\sqrt{(H^0(\widetilde{X},L))} \subseteq R_L.$$ This fact shows that, in our case, both the singularity $X$ and its desingularization $\widetilde{X}$ can be recovered from ${\rm Cox}(X)$ and some combinatorial data in ${\rm Pic}(\widetilde{X}/X)$ (namely the set of very ample line bundles).

\section{The case of $\mathbb{A}$ singularities: an inspiration.}

In order to get our first candidates for the Cox rings of (affine) $\mathbb{A}-\mathbb{D}-\mathbb{E}$ singularities, we look first at the case of $$\mathbb{A}_n = \{xy - z^{n+1}=0\} \subset \mathbb{C}^3,$$ which are toric varieties. In this case, the exceptional curve of the minimal desingularization is a chain of $n$ (-2)-curves $E_1,\ldots,E_n$, with intersection form $$E_i \cdot E_j = \begin{cases} -2 & \mbox{if $i = j$} \\ 1 & \mbox{if $|i-j|=1$} \\ 0 & \mbox{otherwise.} \end{cases}$$ The dual graph of the resolution is therefore: 

%\vspace{1cm}
\begin{center}
\begin{tikzpicture}[scale=1]
\tikzstyle{every node}=[draw,circle,fill=white,minimum size=5mm,inner sep=0pt]
%\tikzstyle{every node}=[draw,shape=circle, very thin, text width=2em, text badly centered, inner sep=3pt]; % shading=ball, ball color=white,
\path (0:0cm) node (e1) {\tiny $E_1$};
\path (0:1cm) node (e2) {\tiny $E_2$};
\path (0:2cm) node (e3) {\tiny $E_3$};
\path (0:4cm) node (en1) {\tiny $E_{n-1}$};
\path (0:5cm) node (en) {\tiny $E_n$};
\draw 
(e1) -- (e2)
(e2) -- (e3)
(en1) -- (en);
\draw[dashed](e3) -- (en1);
\end{tikzpicture}
\end{center}
%\vspace{1cm}

Hence, in this case ${\rm Pic}(\widetilde{\mathbb{A}_n}/\mathbb{A}_n) \cong \mathbb{Z}^n$, and it is known (as for all toric varieties) that ${\rm Cox}(\widetilde{\mathbb{A}_n}) = \mathbb{C}[x_1,y_1,\ldots,y_n,x_n]$ with degrees
{\small
\begin{eqnarray*}
d(x_1) & = & e_1 = (1,0,\ldots,0), \\
d(y_1) & = & -2e_1+e_2 = (-2,1,0,\ldots,0), \\
d(y_k) & = & e_{k-1}-2e_k+e_{k+1} = (0,\ldots,0,1,-2,1,0,\ldots,0), \quad \mbox{ for } k=2,\ldots,n-1, \\
d(y_n) & = & e_{n-1}-2e_n = (0,\ldots,0,1,-2)\\
d(x_n) & = & e_n = (0,\ldots,0,1),
\end{eqnarray*}
}
(where $\{e_1,\ldots,e_n\}$ stands for the standard basis of $\mathbb{Z}^n$).

Thus, the GIT presentation of $\widetilde{\mathbb{A}_n}$ (or $\mathbb{A}_n$, depending on the chosen linearization) is given by the action of the torus $T = {\rm Hom}({\rm Pic}(\widetilde{\mathbb{A}_n}/\mathbb{A}_n),\mathbb{C}^*) \cong (\mathbb{C}^*)^n$ on $\mathbb{C}^{n+2} = {\rm Spec}(\mathbb{C}[x_1,y_1,\ldots,y_n,x_n])$ given by these degrees, i.e.
\begin{multline*}
(t_1,\ldots,t_n) \cdot (x_1,y_1,\ldots,y_n,x_n) = (t_1 x_1, \ t_1^{-2}t_2 y_1, \ t_1t_2^{-2}t_3y_2, \ \ldots, \\ \ldots, \ t_{n-2}t_{n-1}^{-2}t_ny_{n-1}, \ t_{n-1}t_n^{-2}y_n, \ t_nx_n).
\end{multline*}

Indeed, the ring of invariants $\mathbb{C}[x_1,y_1,\ldots,y_n,x_1]^T$ is the subalgebra generated by $$z_1 = y_1 y_2^2 \cdots y_n^n x_n^{n+1}, z_2 = x_1^{n+1} y_1^n y_2^{n-1} \cdots y_n, \mbox{ and } w = x_1 y_1 y_2 \cdots y_n x_n,$$ which is clearly isomorphic to $\mathbb{C}[Z_1,Z_2,W]/(Z_1 Z_2 - W^{n+1})$, the coordinate ring of $\mathbb{A}_n$.

We will now show what we obtain by mimicking this construction for the non-toric du Val singularities. As we shall see, when treating the $\mathbb{D}_n$ singularities, there are some differences between odd $n$ and even $n$, so we treat them separately. We also treat independently $\mathbb{E}_6$, $\mathbb{E}_7$ and $\mathbb{E}_8$ because of their exceptional nature. Nevertheless, it will be apparent after these discussions that the resulting candidates for the Cox rings will all be analogous independently of these distinctions.

\section{Singularities of type $\mathbb{D}$.}

\subsection{Guessing the candidates.}

We focus on $\mathbb{D}$ singularities. They are defined for $n \geq 4$ as the surfaces $$\mathbb{D}_n = \{x^2 + z y^2 + z^{n-1}=0\} \subset \mathbb{C}^3.$$ The dual graph of its resolution is:

%\vspace{1cm}
\begin{center}
\begin{tikzpicture}[scale=1]
\tikzstyle{every node}=[draw,circle,fill=white,minimum size=5mm,inner sep=0pt]
%\tikzstyle{every node}=[draw,shape=circle, very thin, text width=2em, text badly centered, inner sep=3pt]; % shading=ball, ball color=white,
\path (180:1cm) node (e1) {\tiny $E_1$};
\path (270:1cm) node (e2) {\tiny $E_2$};
\path (0:0cm) node (e0) {\tiny $E_0$};
\path (0:1cm) node (e3) {\tiny $E_3$};
\path (0:3cm) node (en2) {\tiny $E_{n-2}$};
\path (0:4cm) node (en1) {\tiny $E_{n-1}$};
\draw 
(e0) -- (e1)
(e0) -- (e2)
(e0) -- (e3)
(en2) -- (en1);
\draw[dashed] (e3) -- (en2);
\end{tikzpicture}
\end{center}
%\vspace{1cm}

Thus, the exceptional curve consist of $n$ (-2)-curves $E_0,E_1,\ldots,E_{n-1}$, where $E_0$ is the only one intersecting three curves, $E_1$ and $E_2$ intersect only $E_0$, and the remaining ones form a chain such that $E_3$ also intersects $E_0$. The (relative) Picard group of $\widetilde{\mathbb{D}_n}$ is isomorphic (via de degree map) to $\mathbb{Z}^n = \mathbb{Z}\langle e_0,e_1,\ldots,e_{n-1}\rangle$, where we have shifted the indices of the canonical basis of $\mathbb{Z}^n$ so that $\deg(L) = \sum_{i=0}^{n-1} \deg(L_{|E_i})$.

As the case of the $\mathbb{A}_n$ singularities suggests, we consider the polynomial ring $\bar{R} = \mathbb{C}[x_1,x_2,x_{n-1},y_0,y_1,\ldots,y_{n-1}]$, with one variable $y_i$ for each exceptional component $E_i$, and three further variables $x_1,x_2$ and $x_{n-1}$ corresponding to the leafs of the dual graph. As above, the degree of these variables is given by the {\em extended intersection matrix}
\begin{equation}
\label{matrix_Dn} \left(
\begin{array}{ccc|cccccccc}
& & & -2 & 1 & 1 & 1 & & & & \\
1 & & & 1 & -2 & & & & & & \\
& 1 & & 1 & & -2 & & & & & \\
& & & 1 & & & -2 & 1 & & & \\
& & & & & & 1 & -2 & \ddots & & \\
& & & & & & & \ddots & \ddots & \ddots & \\
& & & & & & & & \ddots & -2 & 1 \\
& & 1 & & & & & & & 1 & -2
\end{array}
\right)
\end{equation}
where the first three columns are the degrees of $x_1,x_2$ and $x_{n-1}$ respectively, and the rest of the matrix is just the intersection matrix of the exceptional curve. More explicitly, we set
{\small
\begin{eqnarray*}
d(x_1) & = & e_1 = (0,1,0,\ldots,0), \\
d(x_2) & = & e_2 = (0,0,1,0,\ldots,0), \\
d(x_{n-1}) & = & e_{n-1} = (0,\ldots,0,1), \\
d(y_0) & = & -2e_0+e_1+e_2+e_3 = (-2,1,1,1,0,\ldots,0) \\
d(y_1) & = & e_0-2e_1 = (1,-2,0,\ldots,0), \\
d(y_2) & = & e_0-2e_2 = (1,0,-2,0,\ldots,0), \\
d(y_3) & = & e_0-2e_3+e_4 = (1,0,0,-2,1,0,\ldots,0), \\
d(y_k) & = & e_{k-1}-2e_k+e_{k+1} = (0,\ldots,0,1,-2,1,0,\ldots,0) \mbox{ for $k=4,\ldots,n-2$}, \mbox{ and} \\
d(y_{n-1}) & = & e_{n-2}-2e_{n-1} = (0,\ldots,0,1,-2).
\end{eqnarray*}
}

Finally, we consider again the action of $T = {\rm Hom}({\rm Pic}(\widetilde{\mathbb{D}_n}/\mathbb{D}_n),\mathbb{C}^*) \cong (\mathbb{C}^*)^n$ on $\mathbb{C}^{n+3} = {\rm Spec}(\mathbb{C}[x_1,x_2,x_{n-1},y_0,y_1,\ldots,y_{n-1}])$ given by the degrees:
\begin{multline}
\label{action_Dn} (t_0,\ldots,t_{n-1}) \cdot (x_1,x_2,x_{n-1},y_0,\ldots,y_{n-1}) = \\
= (t_1 x_1, t_2 x_2, t_{n-1} x_{n-1}, t_0^{-2}t_1t_2t_3 y_0, t_0t_1^{-2} y_1, t_0t_2^{-2}y_2, t_0t_3^{-2}t_4 y_3, \ldots \\
\ldots, t_{n-2}t_{n-1}^{-2}y_{n-1}).
\end{multline}

We compute now the ring of invariants $\mathbb{C}[x_1,x_2,x_{n-1},y_0,y_1,\ldots,y_{n-1}]^T$ with respect to this action, which turns out to depend only on the parity of $n$. We treat two cases separately.

\begin{lemma} \label{lem_inv_D2k}
The ring of invariants of $\mathbb{C}[x_1,x_2,x_{n-1},y_0,y_1,\ldots,y_{n-1}]$ for $n=2k$ with respect to the action given by (\ref{action_Dn}) is isomorphic to $$\mathbb{C}[Z_1,Z_2,Z_3,W]/(W^2-Z_1Z_2Z_3).$$ Furthermore, the isomorphism is given by
\begin{eqnarray*}
Z_1 & = & x_1^2 y_0^{2k-2} y_1^k y_2^{k-1} y_3^{2k-3} y_4^{2k-4} \cdots y_{2k-1} \\
Z_2 & = & x_2^2 y_0^{2k-2} y_1^{k-1} y_2^k y_3^{2k-3} y_4^{2k-4} \cdots y_{2k-1} \\
Z_3 & = & x_{2k-1}^2 y_0^2 y_1 y_2 y_3^2 y_4^2 \cdots y_{2k-1}^2 \\
W & = & x_1 x_2 x_{2k-1} y_0^{2k-1} y_1^k y_2^k y_3^{2k-2} y_4^{2k-3} \cdots y_{2k-1}^2
\end{eqnarray*}
\end{lemma}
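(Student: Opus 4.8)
The plan is to realize the invariant ring as a monoid algebra and then identify its presentation. Because $T$ acts diagonally through the grading (\ref{action_Dn}), the ring of invariants is the $\mathbb{C}$-span of the $T$-invariant monomials $x_1^{a_1}x_2^{a_2}x_{n-1}^{a_3}y_0^{b_0}\cdots y_{n-1}^{b_{n-1}}$, namely those whose total degree vanishes in $\mathbb{Z}^n$. Writing the vanishing of each coordinate $e_0,\dots,e_{n-1}$ using the columns of (\ref{matrix_Dn}) turns this into a linear Diophantine system, so the invariant ring is the monoid algebra $\mathbb{C}[M]$, where $M\subseteq\mathbb{Z}_{\ge0}^{n+3}$ is the monoid of non-negative integer solutions. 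First I would solve the system over $\mathbb{Q}$: the chain equations coming from $E_3,\dots,E_{n-1}$ force $b_j$ to be affine in $j$, say $b_j=b_0+(j-2)s$, and the node and leaf equations then express $a_1,a_2,a_3,b_2$ together with the relation $b_1+b_2=b_0-s$ in terms of the three parameters $b_0,b_1,s$. Since the assignment $(b_0,b_1,s)\mapsto(\text{exponent vector})$ is injective with integral inverse (read off $b_0,b_1$ and $s=b_3-b_0$), it is a lattice isomorphism onto the degree-zero sublattice $L\subseteq\mathbb{Z}^{n+3}$, and $M=L\cap\mathbb{Z}_{\ge0}^{n+3}$ is identified monoid-isomorphically with the integer points of a rational cone $C\subseteq\mathbb{R}^3$.

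Next I would write down $C$ explicitly. The constraints $a_1=2b_1-b_0\ge0$ and $a_2\ge0$ force $s\le0$, so setting $m=-s\ge0$ gives $C=\{m\ge0,\ 2b_1\ge b_0,\ b_0+2m\ge 2b_1,\ b_0\ge(2k-2)m\}$, in which the first inequality is redundant. I would then check that $C$ is a simplicial (three-facet) cone whose extreme rays are spanned by the primitive vectors corresponding to $Z_1,Z_2,Z_3$, realized as the pairwise intersections of the three facets.

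The heart of the argument is the Hilbert basis of $C\cap\mathbb{Z}^3$. Computing $\det(Z_1,Z_2,Z_3)=2$ shows that $\mathbb{Z}\langle Z_1,Z_2,Z_3\rangle$ has index $2$ in $\mathbb{Z}^3$, so the half-open fundamental parallelepiped contains exactly two lattice points: the origin and $W=\tfrac12(Z_1+Z_2+Z_3)$, which indeed lies in $C$ and is the exponent vector of the stated $W$. By the standard description of simplicial affine monoids, $M$ is therefore generated by $Z_1,Z_2,Z_3,W$, and a direct check of exponents confirms the binomial relation $W^2=Z_1Z_2Z_3$.

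Finally I would upgrade this to the claimed isomorphism. The relation $W^2=Z_1Z_2Z_3$ yields a surjection $\mathbb{C}[Z_1,Z_2,Z_3,W]/(W^2-Z_1Z_2Z_3)\twoheadrightarrow\mathbb{C}[M]$. The source is a three-dimensional domain, since $W^2-Z_1Z_2Z_3$ is irreducible, and the target is a domain of Krull dimension $3$ because $M$ spans a rank-$3$ lattice; a surjection of affine domains of equal dimension has trivial kernel, so the map is an isomorphism. I expect the monoid step to be the main obstacle: one must carefully reduce the chain equations to the three-parameter cone and then, via the index-$2$ lattice computation, pin down that $W$ is exactly the single extra generator beyond the three extreme rays, which is precisely what forces the relation $W^2=Z_1Z_2Z_3$.
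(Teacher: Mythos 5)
Your proposal is correct, and it follows the same overall strategy as the paper --- both arguments reduce the problem to describing the monoid of non-negative exponent vectors of multidegree zero, observe that this monoid sits inside a three-dimensional lattice (you parametrize by $(b_0,b_1,s)$, the paper by the exponents $a=b_{2k-1}$, $b=a_{2k-1}-b_{2k-1}$, $c=b_2$), and cut out the same rational cone by the non-negativity inequalities. Where you genuinely diverge is in the two substantive steps. To find the generators, the paper first extracts $(2,0,1)$ (i.e.\ $Z_3$) and then runs an ad hoc case analysis on the residual elements with $a\in\{0,1\}$ or $c=0$, producing $Z_1,Z_2,W$ by hand; you instead observe that the cone is simplicial with primitive ray generators $Z_1,Z_2,Z_3$, compute $\det(Z_1,Z_2,Z_3)=2$, and conclude from the fundamental-parallelepiped description of simplicial affine monoids that $W=\tfrac12(Z_1+Z_2+Z_3)$ is the unique extra Hilbert basis element. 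This is more structural and scales better, at the cost of having to verify simpliciality and primitivity. For the presentation, the paper simply asserts that ``the four generators only satisfy the relation $W^2=Z_1Z_2Z_3$,'' whereas you justify it properly: the candidate ring is a three-dimensional affine domain (irreducibility of $W^2-Z_1Z_2Z_3$), the monoid algebra is a three-dimensional affine domain, and a surjection between affine domains of equal dimension is injective. On both points your write-up is tighter than the published proof; all the computations you rely on (the affine recursion $b_j=b_0+(j-2)s$, the three facet inequalities, the redundancy of $m\ge 0$, the identification of the rays with $Z_1,Z_2,Z_3$, and membership of $W$ in the cone) check out.
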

\begin{proof}
The ring of invariants is generated by monomials of degree $0$, that is, elements $m=x_1^{b_{1}}x_2^{b_{2}}x_{2k-1}^{b_{2k-1}}y_0^{a_{0}}\cdots y_{2k-1}^{a_{2k-1}}$ such that $a_i,b_i \geq 0$ and $$a_1+a_2+a_3-2a_0=0,a_{i-1}-2a_i+a_{i+1}=0,\ldots.$$ Denoting $a=b_{2k-1}, b=a_{2k-1}-b_{2k-1}, c=b_2$, and using the previous equations on the $a_i,b_i$, we get that
\begin{multline}
m=(x_1^{-1}x_2^{1}x_{2k-1}^{1}y_0^{1}y_2^{1}y_3^{1}\cdots y_{n-1}^{1})^a\cdot \\
\cdot(x_1^{-(n-2)}x_2^{n}y_0^{n-2}y_2^{n-1}y_3^{n-3}\cdots y_{n-1}^{1})^b (x_1^{2}x_2^{-2}y_1^{1}y_2^{-1})^c.
\end{multline}
Hence, the cone of monomials of degree $0$ is (isomorphic to) the subcone of $\mathbb{Z}^3 = \{(a,b,c)\}$ satisfying that all the exponents are non-negative. So looking at the exponent of $x_{2k-1}$ we get that $a\geq 0$, again looking at the exponent of $y_1$ we get that $c\geq 0$. Looking at the exponent of $x_2$ and $x_1$ we get that $-2c+nb+a\geq 0$ and $2c-(n-2)b-a\geq 0$, so $b\geq 0$. Now looking at all exponents we get that $m$ is a true monomial (its exponents are non-negative) if and only if $a-c+(n-1)b\geq 0$, $-2c+nb+a\geq 0$ and $2c-(n-2)b-a\geq 0$. The first inequality follows from the second and the third, so all the exponents are non-negative if and only if
$$a\geq 0, \, b\geq0, \, c\geq0, -2c+nb+a\geq 0 \quad \text{and} \quad 2c-(n-2)b-a\geq 0.$$
The triple $(2,0,1)$, corresponding to the monomial $Z_3$, belongs to this cone, and we take it as one of its generators. The elements from the cone which are not divisible by $(2,0,1)$ satisfy $a=0,1$ or $c=0$. If $c=0$ then from the last inequality we have $-(n-2)b-a\geq 0$, but $n\geq 4$ so $a=b=0$ and it is the origin. If $a=0$ we have $\frac{n}{2}b\geq c\geq(\frac{n}{2}-1)b$ and if $a=1$ then $\frac{n}{2}b+\frac{1}{2}\geq c\geq(\frac{n}{2}-1)b+\frac{1}{2}$. To get the first ones it is enough to add $(0,1,k-1),(0,1,k)$ as generators, and adding $(1,1,k)$ we also obtain the second ones. These three generators correspond to $Z_1,Z_2$ and $W$. Since the four generators only satisfy the relation $W^2=Z_1Z_2Z_3$, the proof is finished.
\end{proof}

Thus, the quotient of $\mathbb{C}^{2k+3}$ by the action in (\ref{action_Dn}) is the affine 3-fold in $\mathbb{C}^4$ given by the equation $W^2-Z_1Z_2Z_3=0$. We claim that intersecting it with the hypersurface $\{Z_1+Z_2+Z_3^{k-1}=0\}$ we obtain a surface isomorphic to the $\mathbb{D}_{2k}$ singularity. Indeed, $\mathbb{D}_{2k}$ can be defined as the surface $\{z^2+(y^{2k-2}-x^2)y=0\} \subset \mathbb{C}^3$. We can write the equation as $z^2+(y^{k-1}-x)(y^{k-1}+x)y=0$, or equivalently $W^2+X_1X_2X_3=0$, where $X_1+X_2=2X_3^{k-1}$, and a linear change of coordinates gives the desired expression $$\mathbb{D}_4 \cong \{W^2-Z_1Z_2Z_3 = Z_1+Z_2+Z_3^{k-1} = 0\} \subset \mathbb{C}^4.$$

Now, substituting the monomials of Lemma \ref{lem_inv_D2k} in the relation $Z_1+Z_2+Z_3^{k-1}=0$, that is, pulling back this equation via the quotient map $$\mathbb{C}^{n+3} \longrightarrow \mathbb{C}^{n+3}/T \cong \{W^2-Z_1Z_2Z_3=0\} \subset \mathbb{C}^4,$$
we obtain
$$y_0^{2k-2} y_1^{k-1} y_2^{k-1} y_3^{2k-3} y_4^{2k-4} \cdots y_{2k-1}(y_1 x_1^2 + y_2 x_2^2 + y_3 y_4^2 \cdots y_{2k-1}^{2k-3} x_{2k-1}^{2k-2}) = 0$$
which suggest that we can obtain the singularity $\mathbb{D}_{2k}$ as the quotient of the hypersurface
$$\{y_1 x_1^2 + y_2 x_2^2 + y_3 y_4^2 \cdots y_{2k-1}^{2k-3} x_{2k-1}^{2k-2}=0\}\subset\mathbb{C}^{n+3}$$
(by the same action of $T$). Indeed, this hypersurface is invariant under the action, and the remaining components of the preimage of $\{Z_1+Z_2+Z_3^{k-1}=0\}$ (given by the other factors in the pull-back) are mapped to the origin by the quotient map.

Summing up, all these computations suggest that $${\rm Cox}(\mathbb{D}_{2k}) \cong \mathbb{C}[x_1,x_2,x_{2k-1},y_0,y_1,\ldots,y_{2k-1}]/(y_1 x_1^2 + y_2 x_2^2 + y_3 y_4^2 \cdots y_{2k-1}^{2k-3} x_{2k-1}^{2k-2}),$$ which is actually true, as we shall prove later.

We take care now of the odd case.

\begin{lemma} \label{lem_inv_D2k+1}
The ring of invariants of $\mathbb{C}[x_1,x_2,x_{n-1},y_0,y_1,\ldots,y_{n-1}]$ for $n=2k+1$ with respect to the action given by (\ref{action_Dn}) is isomorphic to
\begin{multline}
\mathbb{C}[Z_1,Z_2,Z_3,Z_4,Z_5,Z_6]/(Z_2^4-Z_5Z_6,Z_1Z_2^2-Z_3Z_4,Z_2^2Z_4-Z_3Z_6,\\
Z_2^2Z_3-Z_4Z_5,Z_4^2-Z_1Z_6,Z_3^2-Z_1Z_5).
\end{multline}
Furthermore, the isomorphism is given by
\begin{eqnarray*}
Z_1 & = & x_{2k}^2 y_0^2 y_1 y_2 y_3^2 y_4^2 \cdots y_{2k}^2 \\
Z_2 & = & x_1 x_2 y_0^{2k-1} y_1^k y_2^k y_3^{2k-2} y_4^{2k-3} \cdots y_{2k} \\
Z_3 & = & x_2^2 x_{2k} y_0^{2k} y_1^{k} y_2^{k+1} y_3^{2k-1} y_4^{2k-2} \cdots y_{2k}^2 \\
Z_4 & = & x_1^2 x_{2k} y_0^{2k} y_1^{k+1} y_2^{k} y_3^{2k-1} y_4^{2k-2} \cdots y_{2k}^2 \\
Z_5 & = & x_2^4 y_0^{4k-2} y_1^{2k-1} y_2^{2k+1} y_3^{4k-4} y_4^{4k-6} \cdots y_{2k}^2 \\
Z_6 & = & x_1^4 y_0^{4k-2} y_1^{2k+1} y_2^{2k-1} y_3^{4k-4} y_4^{4k-6} \cdots y_{2k}^2.
\end{eqnarray*}
\end{lemma}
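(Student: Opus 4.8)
Following the proof of Lemma~\ref{lem_inv_D2k}, the ring of invariants is spanned by the invariant monomials, i.e. the monomials $m = x_1^{b_1} x_2^{b_2} x_{2k}^{b_{2k}} y_0^{a_0}\cdots y_{2k}^{a_{2k}}$ of multidegree $0$, so that the ring of invariants is the semigroup algebra $\mathbb{C}[S]$, where $S$ is the affine semigroup of exponent vectors with $a_i,b_i \ge 0$ satisfying the linear system read off from the rows of the extended intersection matrix (\ref{matrix_Dn}): namely $a_1+a_2+a_3 = 2a_0$, $a_0+b_1 = 2a_1$, $a_0+b_2 = 2a_2$, $a_0+a_4 = 2a_3$, the chain relations $a_{i-1}+a_{i+1} = 2a_i$ for $4 \le i \le 2k-1$, and $a_{2k-1}+b_{2k} = 2a_{2k}$. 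Since there are $n+3$ variables and $n$ independent equations, $S$ has rank $3$.

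I plan to parametrize $S$ by the three leaf exponents $(b_1,b_2,b_{2k})$ of $x_1,x_2,x_{2k}$. Solving the chain relations (which force the $a_i$ to be affine-linear in $i$ for $i \ge 3$) together with the branch and boundary equations expresses every remaining exponent as a half-integral linear form in $b_1,b_2,b_{2k}$; in particular $a_0 = b_{2k} + (2k-1)(b_1+b_2)/2$ and, at the far end of the chain, $a_{2k} = b_{2k} + (b_1+b_2)/2$. As these forms are monotone along the chain, all nonnegativity constraints reduce to $b_1,b_2,b_{2k} \ge 0$, so the underlying cone is the full octant. The odd parity of $n$ enters only through integrality: a triple gives an honest monomial of $S$ exactly when $b_1 \equiv b_2 \pmod 2$ (forcing $a_0 \in \mathbb{Z}$) and $b_{2k} \equiv (b_1-b_2)/2 \pmod 2$ (forcing $a_1,a_2 \in \mathbb{Z}$). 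Thus $S = \mathbb{R}^3_{\ge 0} \cap \Lambda$ for an index-$4$ sublattice $\Lambda \subset \mathbb{Z}^3$.

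Next I would compute the Hilbert basis of $S$. The primitive $S$-points on the three coordinate axes are $(4,0,0)$, $(0,4,0)$ and $(0,0,2)$, which are the triples of $Z_6,Z_5,Z_1$; the only further indecomposable elements are $(1,1,0)$, $(0,2,1)$ and $(2,0,1)$, the triples of $Z_2,Z_3,Z_4$. A short finite case analysis, entirely parallel to the even case, shows that these six triples generate $S$ and that none is a nontrivial $S$-sum of the others, so they form the minimal generating set; translating back gives the six monomials $Z_1,\ldots,Z_6$ in the statement.

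It then remains to identify the ideal of relations as the kernel of $\mathbb{C}[Z_1,\ldots,Z_6] \to \mathbb{C}[S]$. Each listed binomial records an additive identity among the six triples---for instance $2(0,2,1) = (0,0,2)+(0,4,0)$ gives $Z_3^2 = Z_1 Z_5$, and $4(1,1,0) = (0,4,0)+(4,0,0)$ gives $Z_2^4 = Z_5 Z_6$---so all six lie in the kernel. The genuinely new point, absent in the even case where the semigroup was essentially free, is to prove that these six binomials \emph{generate} the whole toric ideal $I_S$. My plan is a dimension argument: letting $I$ be the ideal they generate, one has $I \subseteq I_S$ with $I_S$ prime of Krull dimension $3$, so it suffices to check that $\mathbb{C}[Z_1,\ldots,Z_6]/I$ is an integral domain of dimension $3$, which forces $I = I_S$; equivalently, one exhibits a monomial order under which the six binomials form a Gr\"obner basis, so that standard monomials biject with $S$. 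I expect this generation step, together with the parity bookkeeping of the second paragraph, to be where the real care is needed.
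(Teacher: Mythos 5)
Your proposal is correct, and the route is genuinely (if mildly) different from the paper's. Both proofs reduce the statement to computing the Hilbert basis of a rank-$3$ affine semigroup and then identifying the toric ideal, but they choose different coordinates on the solution lattice. The paper reuses the parametrization of the even case, writing each invariant monomial as a product of three fixed Laurent monomials with exponents $(a,b,c)=(b_{2k},a_{2k}-b_{2k},b_2)$; there the lattice is all of $\mathbb{Z}^3$ but the cone is cut out by five inequalities, and the six generators are found by analysing the triples not divisible by $(2,0,1)$ according to whether $a=0$ or $a=1$. You parametrize instead by the three leaf exponents $(b_1,b_2,b_{2k})$, which makes the cone the full octant and concentrates all the arithmetic peculiar to odd $n$ in an index-$4$ sublattice; I checked your formulas $a_0=b_{2k}+(2k-1)(b_1+b_2)/2$, the two parity conditions, and the six Hilbert-basis elements $(4,0,0),(0,4,0),(0,0,2),(1,1,0),(0,2,1),(2,0,1)$, and they are all right and do match $Z_6,Z_5,Z_1,Z_2,Z_3,Z_4$. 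What your coordinates buy is a simplicial cone, a visible $x_1\leftrightarrow x_2$ symmetry, and a very clean generation argument (subtract $(1,1,0)$ until one of $b_1,b_2$ vanishes, then subtract $(2,0,1)$ or $(0,2,1)$, then land on a coordinate axis). You are also more careful than the paper about the final step: the paper dismisses the claim that the six binomials generate the whole toric ideal as easy to check, whereas you correctly identify it as the point requiring real work and propose two standard and workable ways to finish (a primality-plus-dimension argument, or a Gr\"obner basis whose standard monomials biject with the semigroup). Either would complete the proof; as written your last step is still a plan rather than an execution, but the plan is sound.
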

\begin{proof}
The first part of the proof of Lemma \ref{lem_inv_D2k} also applies in this case to give $(2,0,1)$ as one of the generators of the cone of monomials. However, in this case the triples not divisible by $(2,0,1)$ are those satisfying $\frac{n}{2}b\geq c\geq(\frac{n}{2}-1)b$ if $a=0$ and $\frac{n}{2}b+\frac{1}{2}\geq c\geq(\frac{n}{2}-1)b+\frac{1}{2}$ if $a=1$. To get the first ones we need to add $(0,1,k),(0,2,2k+1),$ and $(0,2,2k-1)$, and $(1,1,k)$ and $(1,1,k+1)$ are enough to obtain all the second set. This way we get the six monomials $Z_1,\ldots,Z_6$ generating the ring of invariants, and it is easy to check that the six relations given generate all the possible relations.
\end{proof}

In order to use this result to get a candidate for ${\rm Cox}(\mathbb{D}_{2k+1})$, we need to realize $\mathbb{D}_{2k+1}$ as a subvariety of $$V=\left\{\begin{array}{c} Z_2^4-Z_5Z_6=Z_1Z_2^2-Z_3Z_4=Z_2^2Z_4-Z_3Z_6= \\ =Z_2^2Z_3-Z_4Z_5=Z_4^2-Z_1Z_6=Z_3^2-Z_1Z_5=0\end{array}\right\} \subset \mathbb{C}^6.$$ First of all, the equation of $\mathbb{D}_{2k+1}$ can be rewritten as $(z^k-x)(z^k+x)+y^2z=0$, or equivalently $AB^2-CD=0$ with $C+D+A^k=0$. Hence, $\mathbb{D}_{2k+1}$ can be obtained as the intersection of $\{C+D+A^k=0\}$ with $\{AB^2-CD=0\}$. The second hypersurface turns out to be the projection of $V$ to $\mathbb{C}^4$ given by $(Z_1,\ldots,Z_6) \mapsto (A,B,C,D) = (Z_1,\ldots,Z_4)$, which suggests that $\mathbb{D}_{2k+1}$ could be contained in the intersection of $V$ with the hypersurface
\begin{equation} \label{eq-1}
Z_1^k+Z_3+Z_4=0.
\end{equation}
Indeed, this intersection consists of three irreducible components, one of which is $\mathbb{D}_{2k+1}$. In order to get rid of the two extra components, we can add the equations
\begin{equation} \label{eq-2}
Z_1^{k-1}Z_3+Z_2^2+Z_5=Z_1^{k-1}Z_4+Z_2^2+Z_6=0.
\end{equation}
Substituting the expressions of the $Z_i$ in (\ref{eq-1}) and (\ref{eq-2}), and removing the common factors as in the even case, we obtain the relation $$y_1 x_1^2 + y_2 x_2^2 + y_3 y_4^2 \cdots y_{2k}^{2k-2} x_{2k}^{2k-1},$$ which is analogous to the one obtained before.

\subsection{Constructing the (iso)morphism.}

Up to now we have got just reasonable guesses of the Cox rings of the $\mathbb{D}_n$ singularities, but we are still quite far from a proof. In order to prove the isomorphism of graded rings $${\rm Cox}(\mathbb{D}_n) \cong \mathbb{C}[x_1,x_2,x_{n-1},y_0,y_1,\ldots,y_{n-1}]/(y_1 x_1^2 + y_2 x_2^2 + y_3 y_4^2 \cdots y_{n-1}^{n-3} x_{n-1}^{n-2}),$$ we will construct a morphism $\Phi$ of graded rings and then prove that it is an isomorphism on each degree. We will define $\Phi$ by giving the images of the variables $x_1,\ldots,y_{n-1}$ and checking that the relation maps to 0.

First of all, fix once and for all, line bundles $L_0,\ldots,L_{n-1}$ on $\widetilde{\mathbb{D}_n}$ such that $\deg_{E_j}(L_{i|E_j}) = \delta_{ij}$, i.e. such that their isomorphism classes give the canonical basis of ${\rm Pic}(\widetilde{\mathbb{D}_n}/\mathbb{D}_n) \cong \mathbb{Z}^n$. Given any line bundle $L$ of degree $(d_0,\ldots,d_{n-1})$, there is a unique isomorphism $L \cong L_0^{\otimes d_0} \otimes \ldots \otimes L_{n-1}^{\otimes d_{n-1}}$, and we will always implicitly assume that we are working with the latter, even if for simplicity we write the former. All this formalism seems useless, but it is necessary in order to have a well defined multiplication in $${\rm Cox}(\mathbb{D}_n) = \sum_{(d_0,\ldots,d_{n-1})\in\mathbb{Z}^n} H^0(\widetilde{\mathbb{D}_n},L_0^{\otimes d_0} \otimes \ldots \otimes L_{n-1}^{\otimes d_{n-1}}).$$

We start with the images of the $y_i$. By construction, each exceptional component $E_i$ defines a line bundle $\mathcal{O}_{\widetilde{\mathbb{D}_n}}(E_i)$ of degree $d(y_i)$, so $\Phi(y_i)$ should belong to $H^0(\widetilde{\mathbb{D}_n},\mathcal{O}_{\widetilde{\mathbb{D}_n}}(E_i))$. Thus, we define $\Phi(y_i)$ to be the unique section (up to scalar multiplication) $s_i \in H^0(\widetilde{\mathbb{D}_n},\mathcal{O}_{\widetilde{\mathbb{D}_n}}(E_i))$ vanishing exactly along $E_i$.

In order to define $\Phi(x_1),\Phi(x_2)$ and $\Phi(x_{n-1})$ we need to choose sections $t_1 \in H^0(\widetilde{\mathbb{D}_n},L_1), t_2 \in H^0(\widetilde{\mathbb{D}_n},L_2)$ and $t_{n-1} \in H^0(\widetilde{\mathbb{D}_n},L_{n-1})$ respectively. But this choice cannot be arbitrary, since we want them to verify the relation $$s_1 t_1^2 + s_2 t_2^2 + s_3 s_4^2 \cdots s_{n-1}^{n-3} t_{n-1}^{n-2} = 0$$ as a section of $L_0$ (because the degree of this expression is $(1,0,\ldots,0)$). At this point we need to remember where the relation $y_1 x_1^2 + y_2 x_2^2 + y_3 y_4^2 \cdots y_{n-1}^{n-3} x_{n-1}^{n-2} = 0$ did come from, which depends on the parity of $n$.

In the even case $n=2k$, it was obtained as a factor of the pull-back of $Z_1+Z_2+Z_3^{k-1}$. But $x_1$ appears only in the pull-back of $Z_1$, and $Z_1=0$ defines (set-theoretically) an affine line $\overline{C_1} \subset \mathbb{D}_{2k}$ whose strict transform $C_1 \subset \widetilde{\mathbb{D}_{2k}}$ is still an affine line and is a divisor of degree $(0,1,0,\ldots,0)$. Hence, we define $t_1$ as the section of $L_1$ vanishing exactly along $C_1$. Analogously, we define $t_2$ and $t_{n-1}$ as the sections of $L_2$ and $L_{n-1}$ vanishing along the strict transforms of the affine lines in $\mathbb{D}_{2k}$ given by $Z_2=0$ and $Z_3=0$, respectively.

In the odd case, we found that $\mathbb{D}_{2k+1} \cong V \cap \{Z_1^k+Z_3+Z_4=0\} \subset \mathbb{C}^6$. Therefore, we have to take $t_1$, $t_2$ and$t_{n-1}$ the sections of the corresponding line bundles vanishing along the strict transforms of the affine lines defined by $Z_4=0$, $Z_3=0$ and $Z_1=0$ respectively.

With these choices, the relation maps to 0 by construction, and we have a well-defined homomorphism of graded rings $$\Phi: \mathbb{C}[x_1,x_2,x_{n-1},y_0,y_1,\ldots,y_{n-1}]/(y_1 x_1^2 + y_2 x_2^2 + y_3 y_4^2 \cdots y_{n-1}^{n-3} x_{n-1}^{n-2}) \longrightarrow {\rm Cox}(\mathbb{D}_n)$$ as wanted. Moreover, also by construction, its piece of degree $(0,\ldots,0)$ is an isomorphism, which is the basic step in the inductive procedure which we will use to prove that $\Phi$ is an isomorphism in every degree.

\subsection{Reduction to the non-negative case.}

As a first step, we will prove that $\Phi$ is an isomorphism on every degree if and only if it is so on all non-negative degrees (that is, those with no negative component). Note that these degrees correspond to (relatively) nef line bundles.

In order to lighten notation, we will denote $$R=\mathbb{C}[x_1,x_2,x_{n-1},y_0,y_1,\ldots,y_{n-1}]/(y_1 x_1^2 + y_2 x_2^2 + y_3 y_4^2 \cdots y_{n-1}^{n-3} x_{n-1}^{n-2}),$$ and given any divisor $D$ on $\widetilde{\mathbb{D}_n}$ or any line bundle $L \in {\mathrm{Pic}}(\widetilde{\mathbb{D}_n}/\mathbb{D}_n)$, we denote by $R_D$ or $R_L$ and ${\rm Cox}(\mathbb{D}_n)_D$ or ${\rm Cox}(\mathbb{D}_n)_L$, respectively, the summands of degree $\delta(D)$ or $\delta(L)$ of the corresponding rings. We will often identify a divisor with its associated line bundle, writing $H^0(\widetilde{\mathbb{D}_n},D)$ instead of $H^0(\widetilde{\mathbb{D}_n},\mathcal{O}_{\widetilde{\mathbb{D}_n}}(D))$.

Furthermore, in order to simplify the exposition, we make the following
\begin{definition}
Two divisors (line bundles, degrees) $D,D'$ are said to be {\em equivalent} if the following conditions are equivalent:
\begin{itemize}
\item $\Phi_D: R_D \rightarrow Cox(X)_D$ is an isomorphism
\item $\Phi_{D'}:R_{D'} \rightarrow Cox(X)_{D'}$ is an isomorphism
\end{itemize}
\end{definition}

Thus, our next objective is to show that each degree is equivalent a non-negative one. We start with a preliminary lemma (recall that $E_i$ denote the components of the exceptional divisor, and that $\delta_i(D) = \deg_{E_i}(\mathcal{O}_{E_i}(D)) = D\cdot E_i$).

\begin{lemma}\label{red1}
Let $D$ be a divisor such that $d_i=\delta_i(D)<0$. Then $D$ and $D-E_i$ are equivalent.
\end{lemma}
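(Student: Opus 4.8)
The plan is to compare $\Phi$ in the degrees $D$ and $D-E_i$ by multiplying with the distinguished section $s_i=\Phi(y_i)$, which vanishes exactly along $E_i$. Since $\Phi$ is a homomorphism of graded rings and $\Phi(y_i)=s_i$, for every $\alpha\in R_{D-E_i}$ we have $\Phi_D(y_i\alpha)=s_i\cdot\Phi_{D-E_i}(\alpha)$, so the square
\[
\begin{array}{ccc}
R_{D-E_i} & \xrightarrow{\ \Phi_{D-E_i}\ } & {\rm Cox}(\mathbb{D}_n)_{D-E_i} \\
\downarrow{\scriptstyle\,\cdot\,y_i} & & \downarrow{\scriptstyle\,\cdot\,s_i} \\
R_D & \xrightarrow{\ \Phi_D\ } & {\rm Cox}(\mathbb{D}_n)_D
\end{array}
\]
commutes. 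The lemma then reduces to showing that, under the hypothesis $d_i<0$, \emph{both} vertical maps are isomorphisms. Once this is established, the commutativity of the square together with the invertibility of the verticals shows that $\Phi_D$ is an isomorphism if and only if $\Phi_{D-E_i}$ is, which is precisely the claimed equivalence of $D$ and $D-E_i$.

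For the right-hand vertical map I would use the restriction sequence of the curve $E_i$. Tensoring $0\to\mathcal{O}_{\widetilde{\mathbb{D}_n}}(-E_i)\to\mathcal{O}_{\widetilde{\mathbb{D}_n}}\to\mathcal{O}_{E_i}\to0$ with $\mathcal{O}_{\widetilde{\mathbb{D}_n}}(D)$ yields
\[
0\longrightarrow\mathcal{O}_{\widetilde{\mathbb{D}_n}}(D-E_i)\xrightarrow{\ \cdot\,s_i\ }\mathcal{O}_{\widetilde{\mathbb{D}_n}}(D)\longrightarrow\mathcal{O}_{E_i}(d_i)\longrightarrow0,
\]
where the restriction $\mathcal{O}(D)_{|E_i}$ has degree $d_i=D\cdot E_i$. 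Passing to global sections, multiplication by $s_i$ identifies $H^0(\widetilde{\mathbb{D}_n},D-E_i)$ with the kernel of $H^0(\widetilde{\mathbb{D}_n},D)\to H^0(E_i,\mathcal{O}_{E_i}(d_i))$. As $E_i\cong\mathbb{P}^1$ and $d_i<0$, the target vanishes, so $\cdot\,s_i\colon{\rm Cox}(\mathbb{D}_n)_{D-E_i}\to{\rm Cox}(\mathbb{D}_n)_D$ is an isomorphism.

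For the left-hand vertical map I would argue combinatorially in the polynomial ring $\bar R$ before passing to the quotient $R$. A monomial $m=x_1^{b_1}x_2^{b_2}x_{n-1}^{b_{n-1}}y_0^{a_0}\cdots y_{n-1}^{a_{n-1}}$ of multidegree $\delta(D)$ has $i$-th component $\delta_i(m)=-2a_i+\sum_{j\,:\,E_j\cdot E_i=1}a_j+b_i=d_i$, where $a_i$ is the exponent of $y_i$ and $b_i\ge0$ is the exponent of the leaf variable attached to node $i$ (and $b_i=0$ if $i\notin\{1,2,n-1\}$). Since all $a_j,b_i\ge0$, the inequality $d_i<0$ forces $2a_i>\sum_{j\sim i}a_j+b_i\ge0$, hence $a_i\ge1$; thus every monomial of degree $\delta(D)$ is divisible by $y_i$. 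Consequently any homogeneous $g\in\bar R$ of degree $\delta(D)$ is of the form $y_i h$, and reducing modulo the relation shows that $\cdot\,y_i\colon R_{D-E_i}\to R_D$ is surjective. For injectivity I would observe that the defining relation $f=y_1x_1^2+y_2x_2^2+y_3y_4^2\cdots y_{n-1}^{n-3}x_{n-1}^{n-2}$ is irreducible (it is of degree one in $y_1$ with leading coefficient $x_1^2$ coprime, in $\bar R$, to its $y_1$-free part), so $R$ is an integral domain and multiplication by the nonzero element $y_i$ is injective. Hence $\cdot\,y_i$ is an isomorphism, and the reduction is complete.

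The only genuinely delicate point is the descent from $\bar R$ to the quotient $R$ in the surjectivity step: one must be sure that divisibility by $y_i$, evident for monomials of $\bar R$, persists modulo the homogeneous relation. This is handled by noting that $f$ is homogeneous of degree $e_0=(1,0,\dots,0)$ and irreducible, so that $R$ inherits the $\mathbb{Z}^n$-grading and $y_i$ is a nonzerodivisor (equivalently $f\nmid y_i$ in the UFD $\bar R$, so $y_ih\in(f)$ forces $h\in(f)$). Apart from this verification, everything is a formal consequence of the restriction sequence and the commuting square.
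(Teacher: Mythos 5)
Your proposal is correct and follows essentially the same route as the paper: the same commutative square, the vanishing of $H^0(E_i,\mathcal{O}_{E_i}(d_i))$ for $d_i<0$ on the geometric side, and the observation that $d_i<0$ forces every monomial of degree $\delta(D)$ to be divisible by $y_i$ on the algebraic side (the paper phrases this as the vanishing of the two cokernels rather than the invertibility of the two vertical maps, but the content is identical). Your extra care about injectivity of $\cdot\,y_i$ via irreducibility of the defining relation is exactly the justification the paper invokes in the subsequent lemma.
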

\begin{proof}
Let us consider the following exact sequence:
{\small
$$
\xymatrix{
0 \ar[r] & R_{D-E_i} \ar^{\cdot y_i}[r] \ar[d]^{\Phi_{D-E_i}} & R_{D} \ar[r] \ar[d]^{\Phi_{D}} & Q \ar[r] \ar@{-->}[d]^{\cong} & 0 \\
0 \ar[r] & H^0(\widetilde{\mathbb{D}_n},D-E_i) \ar^{\cdot s_i}[r] & H^0(\widetilde{\mathbb{D}_n},D) \ar[r] & H^0(E_i,\mathcal{O}_{E_i}(d_i)) = 0 \ar[r] & 0}
$$
}
We want to see that $\Phi_{D-E_i}$ is an isomorphism if and only if $\Phi_D$ is. On the one hand, We have that $H^0(E_i,\mathcal{O}_{E_i}(d_i)) = 0$ because $d_i<0$. On the other hand, $Q=0$ because multiplying by $y_i$ is a surjection. Indeed, in the case $i=4,\ldots,n-1$, if $x_1^{\beta_1}x_2^{\beta_2}x_3^{\beta_3}y_1^{\alpha_1}\cdots y_n^{\alpha_n}\in R_{D}$ then $\alpha_{i-1}-2\alpha_i+\alpha_{i+1}=d_i<0$, so $\alpha_i>0$ (all the exponents are non-negative), which means that it is the image of $x_1^{\beta_1}x_2^{\beta_2}x_3^{\beta_3}y_1^{\alpha_1}\cdots y_i^{\alpha_i-1}\cdots y_n^{\alpha_n}\in R_{D-E_i}$. The rest of the cases are analogous, so we have proved the assertion.
\end{proof}

We are now ready to prove the next
\begin{proposition} \label{prop-red-1}
For every divisor $D$ there exists a nef divisor $D'$ (i.e., such that $\delta_i(D')\geq 0$ for all $i$) equivalent to $D$.
\end{proposition}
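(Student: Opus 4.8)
The plan is to iterate Lemma~\ref{red1}. Given a divisor $D$, if it is already nef we are done; otherwise some component $d_i=\delta_i(D)$ is negative, and Lemma~\ref{red1} tells us that $D$ is equivalent to $D-E_i$. We replace $D$ by $D-E_i$ and repeat. Since equivalence is clearly transitive, any divisor reached this way is equivalent to $D$, so it suffices to show that after finitely many steps we reach a divisor with all $\delta_i\geq 0$. In other words, the entire content of the proposition is a \emph{termination} statement for this reduction algorithm, and this is exactly the step I expect to be the main obstacle.

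To analyze the algorithm I would pass to multidegrees. Writing $M=(E_i\cdot E_j)$ for the intersection matrix of the exceptional divisor and $d=\delta(D)\in\mathbb{Z}^n$ for the multidegree, one computes $\delta_j(D-E_i)=d_j-E_i\cdot E_j$, so a single reduction step sends $d\mapsto d-Me_i$ (here $e_i$ is the $i$-th standard basis vector, and we use that $M$ is symmetric). The key structural input is that $M$ is negative definite, as holds for the exceptional divisor of any resolution of a normal surface singularity; in the du Val case $-M$ is in fact the Cartan matrix of the corresponding root system. Negative definiteness suggests using the quadratic form $\phi(d)=d^{\top}M^{-1}d$ as a potential. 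A direct computation gives $\phi(d-Me_i)-\phi(d)=-2d_i+M_{ii}=-2d_i-2$, which is $\geq 0$ precisely because we only reduce when $d_i\leq -1$, and is strictly positive unless $d_i=-1$.

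It remains to deduce termination. Since $M^{-1}$ is again negative definite, $\phi\leq 0$ on all of $\mathbb{R}^n$, while on the integer lattice $\phi$ takes values in $\tfrac{1}{|\det M|}\mathbb{Z}$; being non-decreasing and bounded above, $\phi$ is eventually constant, equal to some $c\leq 0$. The only delicate point is the non-strict steps with $d_i=-1$, along which $\phi$ is preserved, so I cannot conclude from the potential alone. To handle them I would observe that the algorithm can never revisit a state: if $d^{(s)}=d^{(t)}$ with $s<t$, then telescoping the increments gives $-M\big(\sum_{s\leq r<t}e_{i_r}\big)=0$, whence $\sum_{s\leq r<t}e_{i_r}=0$ because $M$ is invertible, which is impossible as it is a nonzero sum of standard basis vectors. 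Once $\phi$ has stabilised at $c$, every subsequent state lies in the level set $\{d\in\mathbb{Z}^n:\phi(d)=c\}$, which is finite (an ellipsoid by negative definiteness, or the single point $0$ when $c=0$). A walk in a finite set that never repeats a state must stop, and the algorithm halts only when the current divisor is nef. This proves termination and hence the proposition.
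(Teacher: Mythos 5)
Your proof is correct, but it terminates the reduction algorithm by a genuinely different mechanism than the paper. The paper fixes a specific order on the exceptional components ($E_1<E_2<E_0<E_3<\cdots$), always subtracts the \emph{lowest} negative index, and then argues by contradiction with explicit bookkeeping: it tracks how a $-1$ entry propagates along the tree toward the root until the maximal infinitely-recurring index $j$ is forced to become non-negative permanently. That argument is elementary but tightly wedded to the shape of the $\mathbb{D}_n$ dual graph and to the chosen order. You instead use the monovariant $\phi(d)=d^{\top}M^{-1}d$, whose validity rests on the negative definiteness of the intersection matrix $M$ (true for the exceptional divisor of any resolution of a normal surface singularity, and in particular for $-M$ a Cartan matrix here); your computation $\phi(d-Me_i)-\phi(d)=-2d_i-2\geq 0$ is right, the discreteness of the values of $\phi$ in $\tfrac{1}{|\det M|}\mathbb{Z}$ forces eventual stabilisation, compactness of the ellipsoidal level set gives finiteness, and the no-revisiting argument via invertibility of $M$ closes the loop. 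What your route buys is order-independence (any negative index may be chosen at each step) and immediate generality: the same potential works verbatim for arbitrary exceptional configurations, including the non-Dynkin graphs raised as open questions in the last section, whereas the paper's case analysis would have to be redone. What the paper's route buys is that it stays entirely within explicit integer manipulations on the degree vector, never invoking $M^{-1}$ or definiteness. One small presentational point: you should say a word on why $M$ is invertible and negative definite (Mumford's theorem, or simply that $-M$ is the $D_n$ Cartan matrix), since both facts carry the whole argument.
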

\begin{proof}
Order the $E_i$'s as $E_1<E_2<E_0<E_3<\cdots<E_n$ and proceed in the following way:
\begin{itemize}
\item if $\delta(D)$ has some negative component, choose the lowest (according to the order above) negative component $i$ and replace $D$ by $D-E_i$.
\item if $\delta(D)$ has no negative component, stop the procedure.
\end{itemize}
Due to Lemma \ref{red1} it is enough to prove that the above procedure stops after finitely many steps. Suppose that for the divisor $D$ it does not. Let $A = a_1,a_2,a_3,\ldots$ be the infinite sequence of choices of the lowest negative coordinate in the steps of the procedure, and let $d^1,d^2,d^3,\ldots$ be the sequence of degrees. Let $j$ be the highest coordinate (always according to the order fixed above) that appears infinitely many times in the sequence $A$. There exists some $n_0$ such that $a_n \leq j$ for $n>n_0$ (since higher indices appear only finitely many times). Let $n_1$ be greater than $n_0$ and such that $a_{n_1}=j$ (it is possible since $j$ appears infinitely many times). So $d^{n_1}$ has nonnegative coordinates for $i<j$ and $d^{n_1}_j<0$ so $d^{n_1}=(nn,nn,\dots,nn,d^{n_1}_j,\dots)$ ($nn$ denotes a nonnegative number and $p$ denotes a positive number). Let us observe what happens in the next steps of procedure. If $d^{n_1}_i>0$ for some $i=0,3,\dots,j-1$ and it is the highest such $i$ then
$d^{n_1}=(nn,nn,\dots,p,0,\dots,0,d^{n_1}_j,\dots)$

$d^{n_1+1}=(nn,nn,\dots,p,0,\dots,0,-1,d^{n_1}_j+2,\dots)$

$d^{n_1+2}=(nn,nn,\dots,p,0,\dots,0,-1,1,d^{n_1}_j+1,\dots)$

$d^{n_1+3}=(nn,nn,\dots,p,0,\dots,0,-1,1,0,d^{n_1}_j+1,\dots)$

...

$d^{n_1+j-i-1}=(nn,nn,\dots,p,-1,1,0,\dots,0,d^{n_1}_j+1,\dots)$

$d^{n_1+j-i}=(nn,nn,\dots,p-1,1,0,\dots,0,d^{n_1}_j+1,\dots)$

\noindent so all indices lower then $j$ are nonnegative and $j$-th is greater by $1$. The same happens then

$d^{n_1}=(0,\dots,0,d^{n_1}_j,\dots)$

$d^{n_1+1}=(0,\dots,0,-1,d^{n_1}_j+2,\dots)$

$d^{n_1+2}=(0,\dots,0,-1,1,d^{n_1}_j+1,\dots)$

$d^{n_1+3}=(0,\dots,0,-1,1,0,d^{n_1}_j+1,\dots)$

...

$d^{n_1+j-3}=(0,0,0,-1,1,0,\dots,0,d^{n_1}_j+1,\dots)$

$d^{n_1+j-2}=(-1,0,0,1,0,\dots,0,d^{n_1}_j+1,\dots)$

$d^{n_1+j-1}=(1,-1,-1,0,0,\dots,0,d^{n_1}_j+1,\dots)$

$d^{n_1+j}=(-1,1,1,0,0,\dots,0,d^{n_1}_j+1,\dots)$

$d^{n_1+j+1}=(1,0,0,0,\dots,0,d^{n_1}_j+1,\dots)$

Hence after finitely many steps we will have $d^m_j\geq 0$, and for all $i<j$, $d^m_i\geq 0$ so $a_m>j$, which is a contradiction.
\end{proof}

\subsection{Reduction to basic cases.}

Now that we know that it is enough to check that $\Phi$ is an isomorphism on nef degrees, we want to reduce this problem to check only a few (basic) degrees. As before, we need first some results studying the relation between $\Phi_D$ and $\Phi_{D'}$ when adding or subtracting elementary divisors. However, the situation now is a bit more complicated because we should always have nef degrees.

\begin{lemma}\label{red2}
Let $D$ be a nef divisor such that $d_i=\delta_i(D)\geq 2$ for some $i$. Then $D+E_i$ is nef and equivalent to $D$.
\end{lemma}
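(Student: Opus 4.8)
The plan is to reproduce the mechanism of Lemma \ref{red1}, but now the cokernels of the two multiplication/restriction maps are nonzero and must be matched against each other. First I would check that $D+E_i$ is nef by a direct multidegree computation. Since $\delta_j(E_i)=E_i\cdot E_j$ equals $-2$ for $j=i$, equals $1$ for the neighbours of $E_i$ in the dual graph, and is $0$ otherwise, passing from $D$ to $D+E_i$ leaves every coordinate of $\delta(D)$ unchanged except that the $i$-th drops by $2$ while the neighbouring ones increase by $1$. As $d_i\geq 2$ we get $\delta_i(D+E_i)=d_i-2\geq 0$, the neighbouring coordinates only grow, and all others are unaffected; hence $D+E_i$ is again nef.

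For the equivalence I would set up a ladder of two short exact sequences. On the geometric side, tensoring the structure sequence $0\to\mathcal{O}_{\widetilde{\mathbb{D}_n}}(-E_i)\to\mathcal{O}_{\widetilde{\mathbb{D}_n}}\to\mathcal{O}_{E_i}\to 0$ with $\mathcal{O}(D+E_i)$ and using $(D+E_i)\cdot E_i=d_i-2$ gives
\[
0\to H^0(\widetilde{\mathbb{D}_n},D)\xrightarrow{\ \cdot s_i\ } H^0(\widetilde{\mathbb{D}_n},D+E_i)\to H^0(E_i,\mathcal{O}_{E_i}(d_i-2)).
\]
The restriction map on the right is surjective because $H^1(\widetilde{\mathbb{D}_n},D)=0$: indeed $D$ is $\pi$-nef and $\mathbb{D}_n$ is a rational singularity, so $R^1\pi_*\mathcal{O}(D)=0$, and as $\mathbb{D}_n$ is affine this yields $H^1(\widetilde{\mathbb{D}_n},D)=H^0(\mathbb{D}_n,R^1\pi_*\mathcal{O}(D))=0$. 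On the algebraic side, multiplication by $y_i$ (a nonzerodivisor in $R$, as the defining relation is irreducible) yields $0\to R_D\xrightarrow{\ \cdot y_i\ } R_{D+E_i}\to Q'\to 0$, where $Q'=R_{D+E_i}/y_iR_D=(R/(y_i))_{D+E_i}$. Since $\Phi$ is a graded ring homomorphism with $\Phi(y_i)=s_i$, the two squares commute and induce a map $\psi$ on cokernels, giving a commutative ladder with exact rows and vertical maps $\Phi_D,\Phi_{D+E_i},\psi$.

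Granting this, the snake lemma finishes the argument at once: if $\psi$ is an isomorphism then $\ker\psi=0$ and ${\rm coker}\,\psi=0$, so the long exact sequence forces $\ker\Phi_D\cong\ker\Phi_{D+E_i}$ and ${\rm coker}\,\Phi_D\cong{\rm coker}\,\Phi_{D+E_i}$; hence $\Phi_D$ is an isomorphism if and only if $\Phi_{D+E_i}$ is, which is exactly the claimed equivalence of $D$ and $D+E_i$. Thus everything reduces to proving that $\psi\colon Q'\to H^0(E_i,\mathcal{O}_{E_i}(d_i-2))\cong H^0(\mathbb{P}^1,\mathcal{O}(d_i-2))$ is an isomorphism.

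This last point is where I expect the real work to lie. I would compute $Q'=(R/(y_i))_{D+E_i}$ explicitly, describing it via the monomials of multidegree $\delta(D+E_i)$ that survive after setting $y_i=0$, modulo the single defining relation (whose degree is $e_0$); the intrusion of this relation makes the bookkeeping genuinely dependent on which index $i$ is chosen, paralleling the case analysis already visible in Lemma \ref{red1}. The aim is to produce a $\mathbb{C}$-basis of $Q'$ of size $d_i-1$ that $\psi$ carries to a basis of $H^0(\mathbb{P}^1,\mathcal{O}(d_i-2))$; equivalently, to prove $\psi$ surjective together with $\dim_\mathbb{C}Q'\leq d_i-1$. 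Surjectivity should follow from the fact that $E_i$ meets each neighbouring curve transversally in a single point, so that the restrictions to $E_i\cong\mathbb{P}^1$ of suitable monomial sections already span all of $\mathcal{O}(d_i-2)$, while the dimension bound is the purely combinatorial ingredient. The main obstacle is therefore the concrete identification of this cokernel, the homological formalism being routine once $\psi$ is known to be an isomorphism.
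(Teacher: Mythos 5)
Your homological skeleton is exactly the paper's: the same ladder of short exact sequences, injectivity of multiplication by $y_i$ from irreducibility of the relation, vanishing of $H^1(\widetilde{\mathbb{D}_n},D)$ (you invoke rationality of the singularity where the paper uses relative Kawamata--Viehweg; both are fine for a relatively nef and big $D$ on an affine base), and the reduction of the equivalence to showing that the induced map $\psi$ on cokernels is an isomorphism. You also make explicit the nefness check that the paper leaves implicit, which is a small plus.

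The problem is that you stop precisely where the proof begins. The entire mathematical content of Lemma \ref{red2} is the claim you defer: that $Q=(R/(y_i))_{\delta(D+E_i)}$ has dimension exactly $d_i-1$. This is not routine bookkeeping. The paper proves it by normalizing monomials so that $y_1x_1^2$ does not divide them (using the defining relation), observing that the surviving monomials must satisfy $a_{i-1}+a_{i+1}=d_i-2$, and then showing that each of the $d_i-1$ choices of $(a_{i-1},a_{i+1})$ extends to a monomial with non-negative exponents in \emph{exactly one} way --- the delicate point being the passage through the trivalent node $E_0$, where a one-parameter family of exponent solutions exists and one must check that exactly one member is both non-negative and not divisible by $y_1x_1^2$. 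That this count comes out right is a special feature of the $\mathbb{D}_n$ graph: Remark \ref{counterex} exhibits a configuration with one trivalent node for which this very lemma fails because no monomial of the required degree exists, so no amount of general homological or transversality reasoning (``the restrictions of suitable monomial sections span'') can substitute for the explicit enumeration. As written, your proposal is a correct strategy with the decisive combinatorial step missing, and the surjectivity of $\psi$ is likewise asserted rather than derived from an identified spanning set of monomial images.
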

\begin{proof}
Let us consider the following exact sequence:
{\footnotesize
\begin{equation} \label{Exact-seq}
\xymatrix{
0 \ar[r] & R_{D} \ar^{\cdot y_i}[r] \ar[d]^{\Phi_D} & R_{D+E_i} \ar[r] \ar[d]^{\Phi_{D+E_i}} & Q \ar[r] \ar@{-->}[d]^{\cong} & 0 \\
0 \ar[r] & H^0(\widetilde{\mathbb{D}_n},D) \ar^{\cdot s_i}[r] & H^0(\widetilde{\mathbb{D}_n},D+E_i) \ar[r] & H^0(E_i,\mathcal{O}_{E_i}(d_i-2)) \ar[r] & H^1(\widetilde{\mathbb{D}_n},D)}
\end{equation}
}
On the one hand, since $D$ is relatively nef (by hypothesis) and relatively big (because $\pi$ is birational), and $\mathbb{D}_n$ is affine, then 
$$H^1(\widetilde{\mathbb{D}_n},D)=H^0(\mathbb{D}_n,R^1\pi_*D)=0$$
because of the relative Kawamata-Viehweg vanishing theorem. On the other hand, the multiplication by $y_i$ in the upper row is of course injective, since the relation in $R$ is irreducible. We are going to describe the generators of $Q$ over $\mathbb{C}$ and observe that it has dimension equal to the dimension of $H^0(E_i,\mathcal{O}_{E_i}(d_i-2))$, which is $d_i-1$. The monomials of $R_{D+E_i}$ which are not in the image of $R_{D}$ are of the form $m = x_1^{b_1}x_2^{b_2}x_{n-1}^{b_{n-1}}y_0^{a_0}\cdots y_{n-1}^{a_{n-1}}$ with $a_i=0$. We can assume moreover, thanks to the relation, that $y_1x_1^2$ does not divide $m$, so either $a_1=0$ or $b_1\leq 0$. They have to be of degree $\delta(D+E_i)$ so $-2a_i+a_{i+1}+a_{i-1}=a_{i+1}+a_{i-1}=d_i-2$ (it is the case $i=3,\dots,n-1$, the remaining ones are analogous), so we have $d_i-1$ possibilities: $a_{i-1}=a$ and $a_{i+1}=d_i-2-a$ for $a=0,\dots,d_i-2$. It is enough to show that each possibility can be extended to a monomial in a unique way. Before going through the vertex $E_0$ it is easy because the degree force us to have $a_{i-j}=ja_{i-1}+(j-1)d_{i-1}+\dots+d_{i-j+1}$. Then we have either $a_1+a_2+a_3-2a_0=d_0$ and $a_0>a_3$ or $a_0=a_3=0$. In the last case the only solutions are of the form
$$(a_1,b_1,a_2,b_2)=(c,2c+d_1,d_0-c,2d_0-2c+d_2).$$
In the case $a_1$ the only possibility is $c=0$, for which all components are non-negative. Otherwise, if $a_1>0$ we must have $b_1=0,1$ according with the parity of $d_1$, and in each case there is only one possibility for $c$. On the other hand, in the first case the solutions are of the form
{\small
\begin{multline*} 
(a_1,b_1,a_2,b_2)=\left(\left\lfloor \frac{1}{2}(2a_0-a_3+d_0)\right\rfloor+c,a_0-\left\lfloor \frac{1}{2}(2a_0-a_3+d_0)\right\rfloor+d_1+2c,\right.\\
\left.\left\lceil \frac{1}{2}(2a_0-a_3+d_0)\right\rceil-c,a_0-\left\lceil \frac{1}{2}(2a_0-a_3+d_0)\right\rceil+d_2-2c\right)
\end{multline*}
}
and again there is only one possible $c$ such that $y_1x_1^2\not| \, \, m$.
\end{proof}

As we said before, the situation now is more complicated and it is not enough for our purposes to add single exceptional components to the divisor. Indeed, we will need to add chains of exceptional components in order to reach a basic case having only nef divisors along the procedure, so we need the following lemma.

\begin{lemma}\label{red3}
Let $D$ be a divisor such that $d_i = \delta_i(D) = 1$, $d_j = \delta_j(D) \geq 1$ and $\delta_k(D)=0$ for all $E_k$ between $E_i$ and $E_j$, and let $E=E_i+\cdots+E_j$ be the sum of the components corresponding to the path joining $E_i$ and $E_j$. Assume moreover that $d_j = 1$ if $E_j$ is not a leaf of the dual graph. Then $D+E$ is nef and equivalent to $D$.
\end{lemma}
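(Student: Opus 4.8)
The plan is to mimic the strategy of Lemma \ref{red2}, replacing the single component $E_i$ by the whole path and the maps $\cdot y_i$, $\cdot s_i$ by multiplication by the products over the components $E_{l_0}=E_i,E_{l_1},\dots,E_{l_m}=E_j$ of the path. First I would compute the multidegree of $D+E$ from the intersection numbers. Since the dual graph is a tree, the path joining $E_i$ and $E_j$ is simple, so $E\cdot E_{l_p}=-2+1+1=0$ for each interior component, $E\cdot E_i=E\cdot E_j=-2+1=-1$ at the two endpoints, and $E\cdot E_k\in\{0,1\}$ for a component off the path (equal to $1$ exactly when $E_k$ meets the path). Hence $\delta_i(D+E)=0$, $\delta_j(D+E)=d_j-1$, $\delta_k(D+E)=0$ for interior $E_k$, and $\delta_k(D+E)=\delta_k(D)+(E\cdot E_k)\geq\delta_k(D)\geq 0$ off the path (using that $D$ is nef, the standing assumption of this subsection). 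As $d_j\geq 1$, all these are non-negative, so $D+E$ is nef; note that the hypothesis $d_j=1$ for non-leaf $E_j$ is precisely what forces $\delta_j(D+E)=0$ in that case, keeping the target among the intended basic degrees.

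Next I would assemble the commutative diagram with the two rows
$$0 \to R_D \xrightarrow{\ \cdot\prod_p y_{l_p}\ } R_{D+E} \to Q \to 0,$$
$$0 \to H^0(\widetilde{\mathbb{D}_n},D) \xrightarrow{\ \cdot\prod_p s_{l_p}\ } H^0(\widetilde{\mathbb{D}_n},D+E) \to H^0(E,\mathcal{O}_E(D+E)) \to H^1(\widetilde{\mathbb{D}_n},D),$$
the bottom one coming from tensoring $0\to\mathcal{O}(-E)\to\mathcal{O}\to\mathcal{O}_E\to 0$ with $\mathcal{O}(D+E)$, with vertical maps $\Phi_D,\Phi_{D+E}$ and an induced map $\bar\Phi$ on cokernels. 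The top multiplication is injective because $R$ is a domain (the defining relation is irreducible), and exactly as in Lemma \ref{red2} the relative Kawamata--Viehweg vanishing theorem gives $H^1(\widetilde{\mathbb{D}_n},D)=0$, since $D$ is relatively nef and big and $\mathbb{D}_n$ is affine; thus both rows are short exact. I would then compute the right-hand cohomology: $E$ is a chain of $(-2)$-curves on which $\mathcal{O}_E(D+E)$ has restriction degrees $(0,\dots,0,d_j-1)$, so a section is a constant along the degree-$0$ part matching, at the last node, the value of a degree-$(d_j-1)$ section on $E_j$; hence $\dim_{\mathbb{C}}H^0(E,\mathcal{O}_E(D+E))=d_j$.

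The crux, and the step I expect to be the main obstacle, is identifying a $\mathbb{C}$-basis of $Q$ and proving that $\bar\Phi$ is an isomorphism. The monomials of $R_{D+E}$ representing $Q$ are those not divisible by $\prod_p y_{l_p}$, i.e.\ with $a_{l_p}=0$ for some $p$ on the path; normalizing via the relation so that $y_1x_1^2\nmid m$ and then propagating the exponents forced by the degree equations along the chain and through the trivalent vertex $E_0$, I would show there are exactly $d_j$ such monomials and that $\bar\Phi$ sends them to a basis of $H^0(E,\mathcal{O}_E(D+E))$. This bookkeeping is heavier than in Lemma \ref{red2} because a whole chain and the branch point are involved; here the leaf hypothesis on $E_j$ is what guarantees that each monomial extends uniquely past $E_j$, pinning down the count.

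Finally, with $\bar\Phi$ an isomorphism and both rows short exact, a diagram chase (the snake lemma) yields $\ker\Phi_D\cong\ker\Phi_{D+E}$ and $\operatorname{coker}\Phi_D\cong\operatorname{coker}\Phi_{D+E}$, so $\Phi_D$ is an isomorphism if and only if $\Phi_{D+E}$ is. Together with the nefness verified above, this shows that $D$ and $D+E$ are equivalent, as claimed.
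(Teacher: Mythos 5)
Your proposal follows essentially the same route as the paper's proof: the same short exact sequences with multiplication by $y_i\cdots y_j$ and $s_i\cdots s_j$, vanishing of $H^1(\widetilde{\mathbb{D}_n},D)$ by relative Kawamata--Viehweg, the computation $\dim H^0(E,\mathcal{O}_E(D+E))=d_j$, and a matching count of the monomials of $R_{D+E}$ with some $a_{l_p}=0$; you additionally verify explicitly that $D+E$ is nef, which the paper asserts without computation. The one place where you only announce what the paper actually executes is the enumeration showing $Q$ has exactly $d_j$ generators (the paper does this for $j>i\geq 3$, using the relations $a_{k-1}-2a_k+a_{k+1}=0$ along the chain to force either $(a_i,\dots,a_j)=(0,\dots,0)$ or $a_j=0$, $a_{j-1}\in\{1,\dots,d_j-1\}$), but your sketch of that step is the right one.
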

\begin{proof}
First of all, notice that since the dual graph of the resolution is a tree, there is only one path joining $E_i$ and $E_j$ and hence $E$ is well defined.

Let us consider now the following exact sequence:
{\footnotesize
$$
\xymatrix{
0 \ar[r] & R_{D} \ar^{\cdot y_i\cdots y_j}[r] \ar[d]^{\Phi_{D}} & R_{D+E} \ar[r] \ar[d]^{\Phi_{D+E}} & Q \ar[r] \ar@{-->}[d]^{\cong} & 0 \\
0 \ar[r] & H^0(\widetilde{\mathbb{D}_n},D) \ar^{\cdot s_i\cdots s_j}[r] & H^0(\widetilde{\mathbb{D}_n},D+E) \ar[r] & H^0(E,\mathcal{O}_E(D+E)) \ar[r] & H^1(\widetilde{\mathbb{D}_n},D)}
$$
}
As in the previous proof, $H^1(X,D)=0$ due to the relative Kawamata-Viehweg vanishing theorem. In the upper row multiplication by $y_i\cdots y_j$ is again an injection, since the relation in $R$ is irreducible. We are going to describe the generators of $Q$ over $\mathbb{C}$ and observe that its dimension equals the dimension of $H^0(E,\mathcal{O}_E(D+E))$. The rest of the cases being analogous, we will give the details assuming $j > i \geq 3$. In this case, $\dim H^0(E,\mathcal{O}_E(D+E))=d_i+d_j-1=d_j$. The monomials of $R_{D+E}$ which are not in the image of $R_D$ are of the form $x_1^{b_1}x_2^{b_2}x_3^{b_3}y_0^{a_0}\cdots y_{n-1}^{a_{n-1}}$ where at least one of the $a_i,\dots,a_j$ is equal to $0$. They have to be of degree $\delta(D+E)$ so $a_{k-1}-2a_k+a_{k+1}=0$ for $k=i,\dots,j-1$, hence if $a_k=0$ for some of these $k$, then $(a_i,\dots,a_j)=(0,\dots,0)$. Otherwise, it must hold $a_j=0$ and $a_{j-1}>0$, and since $a_{j-1}-2a_j+a_{j+1}=d_j-1$, there are exactly $d_j-1$ possibilities ($a_{j-1}=1,2,\ldots,d_j-1$) each of which can be extended to a monomial in a unique way. These possibilities, together with the first one, make a total of $d_j$ possible monomials, as wanted.
\end{proof}

We are ready to prove the next reduction result.

\begin{proposition}  \label{prop-red-2}
Every divisor $D$ is equivalent to a divisor $D'$ of degree $d'\in\{0,ke_1=(0,k,0,\dots,0),ke_2=(0,0,k,0,\dots,0),e_{n-1}=(0,\dots,0,k) \,|\, k>0\}$.
\end{proposition}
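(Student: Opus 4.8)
The plan is to promote Lemmas \ref{red2} and \ref{red3} into an explicit reduction algorithm, in the same spirit as the ordered procedure used in Proposition \ref{prop-red-1}. After invoking Proposition \ref{prop-red-1} to assume $D$ nef, I read the two lemmas as \emph{moves} on the nef degree vector $d=\delta(D)\in\mathbb{Z}_{\ge0}^n$: Lemma \ref{red2} lets me trade a coordinate $d_i\ge2$ for $d+\delta(E_i)$ (firing the node $E_i$), and Lemma \ref{red3} lets me slide an isolated unit across a run of zeros and coalesce it with another positive coordinate, replacing $d$ by $d+\delta(E_i+\cdots+E_j)$. Both replacements keep $d$ nef and in the same equivalence class, so the whole game takes place inside one coset of the sublattice $M\mathbb{Z}^n\subset\mathbb{Z}^n$ generated by the columns of the intersection matrix $M=(E_i\cdot E_j)$; since $\det M=\pm4$ there are exactly four such cosets, and the representatives $0,\ ke_1,\ ke_2,\ ke_{n-1}$ are designed to meet all of them.

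The reduction proceeds in two stages. First I push all the degree out of the central vertex $E_0$ and the interior chain vertices $E_3,\dots,E_{n-2}$ and onto the three leaves $E_1,E_2,E_{n-1}$: repeated applications of Lemma \ref{red2} dispose of any coordinate $\ge2$, while Lemma \ref{red3} transports the remaining isolated units outward along the chain and through the fork. The basic transport identities are $e_m\sim(n-m)e_{n-1}$ along the long leg (obtained by reading backwards the descent $(n-m)e_{n-1}\rightarrow\cdots\rightarrow e_m$, e.g. $3e_{n-1}\rightarrow e_{n-2}+e_{n-1}\rightarrow e_{n-3}$) and $e_0\sim2e_1$ at the fork. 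Second, I coalesce the at most three surviving leaf contributions onto a single leaf using the tripod instance of Lemma \ref{red3}: a unit on one leg together with a unit on another leg collapses, through $E_0$, to a unit on the remaining leaf (explicitly $e_1+e_{n-1}\rightarrow e_2$, since the path $E_1+E_0+E_3+\cdots+E_{n-1}$ has intersection vector $-e_1+e_2-e_{n-1}$). Iterating, the support shrinks to a single leaf, at which point $d$ is one of the allowed $ke_1,ke_2,ke_{n-1}$ (or $0$) and the procedure halts.

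The delicate point, which I expect to be the main obstacle, is proving termination while landing \emph{exactly} on the list $\{0,ke_1,ke_2,ke_{n-1}\}$. These targets are not characterized by any obvious extremal quantity: although $-D^2=-d^{\top}M^{-1}d$ strictly decreases under every firing or coalescing move (both raise $D^2$ by at least $2$), it is minimized at the \emph{wrong} representative — for instance at $e_{n-3}$ rather than at the basic $(n-3)e_{n-1}$ to which it is equivalent — so a pure $-D^2$-descent gets stuck at isolated units sitting on non-leaf vertices ($e_0,e_3,\dots,e_{n-2}$). Consequently the algorithm must alternately raise and lower the divisor, and its termination cannot be read off from $-D^2$ alone. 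I would instead organize the bookkeeping by an ordered, lexicographic measure as in the proof of Proposition \ref{prop-red-1} — tracking the outermost non-leaf coordinate that is still nonzero and showing that each round clears it permanently — so that after finitely many steps only a single leaf coordinate survives.
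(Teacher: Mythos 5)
Your moves are exactly the paper's (fire any vertex with $d_i\ge 2$ via Lemma \ref{red2}, coalesce two units joined by zeros via Lemma \ref{red3}, then transport a lone unit out to a leaf), your transport identities $e_m\sim(n-m)e_{n-1}$, $e_0\sim 2e_1$, $e_1+e_{n-1}\sim e_2$ are correctly computed, and your observation that $-d^{\top}M^{-1}d\ge 0$ drops by at least $2$ under both moves is a genuinely nicer termination certificate than the paper's potential $S=\tfrac12(d_1+d_2)+\sum_{i\ne 1,2}d_i$. The gap is in how you assemble these pieces. By scheduling the transport moves (which go \emph{uphill} for $-D^2$) inside stage one, you destroy your own monovariant and are forced to fall back on a termination argument you only assert: ``tracking the outermost non-leaf coordinate and showing each round clears it permanently.'' That measure is not obviously monotone — e.g.\ firing a leaf with $2e_1\mapsto e_0$ creates a nonzero coordinate at the fork where there was none, and $e_0\to 2e_1\to e_0\to 2e_2\to\cdots$ shows the raw move set does admit cycles — so as written the proposal proves termination of neither your interleaved algorithm nor any substitute for it.

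The repair is just a reordering, and it is what the paper does. Run \emph{only} the two downhill moves until neither applies; by your $-D^2$ argument this takes finitely many steps. At that point every coordinate is $0$ or $1$ and no two units are separated purely by zeros; since on a tree the two closest units always have only zeros strictly between them, the terminal configuration must be $0$ or a single $e_m$. Only now, once, do you invoke the transport identities — each a fixed finite chain of equivalences, so no termination issue arises — to replace $e_m$ by $ke_j$ for the leaf $E_j$ ending the branch through $E_m$ (any leaf if $m=0$). With that restructuring your argument closes and is, step for step, the paper's proof; the paper's own endgame phrase ``apply Lemma \ref{red3} in the other way'' is precisely your backwards reading of the descent $(n-m)e_{n-1}\to\cdots\to e_m$.
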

\begin{proof}
First we will prove that there is an equivalent divisor $D'$ with all degrees equal to $0$ except at most one, which is $1$.

We proceed in the following way:
\begin{itemize}
\item when we have some coordinate $\delta_i(D)\geq 2$ then we replace $D$ by $D+E_i$,
\item when we have all coordinates $\delta_i(D)\in\{0,1\}$ and at least two $1$'s on coordinates $i,j$ and
$0$'s between them, then we replace $D$ by $D+E$, where $E=E_i+\cdots+E_j$,
\item when $D$ has at most one nonzero coordinate which is equal to $1$ we stop.
\end{itemize}

Firstly, let us observe that applying the above procedure to a nonnegative degree we stay in the nonnegative case. Secondly, due to Lemmas \ref{red2} and \ref{red3}, it is enough to prove that the above procedure stops after finitely many steps, because all divisors produced by this procedure are equivalent.

Suppose that for some divisor $D$ it is not the case. Let $A=a_1,a_2,a_3,\ldots$ be the infinite sequence of sets of indices of the $E_i$'s added in the above procedure in the consecutive steps, and let $d^1,d^2,d^3,\ldots$ be the consecutive degrees. We define the new order on the $E_i$'s: $E_1<E_2<E_0<E_3<\cdots<E_n$. Let $j$ be the highest coordinate (according to the new order) that appears infinitely many times in the sets in $A$. There exists some $n_0$ such that for $n>n_0$ $a_n$ contains no element greater then $j$ (since higher indices appear only finitely many times). Given any degree $d$, let us consider the sum
$$S=\frac{1}{2}(d_1+d_2)+\sum_{i\neq 1,2}d_i.$$
Observe that in the steps of the above procedure the value of $S$ does not increase and after a step such that $j \in a_k$ it decreases by $1$, additionally this sum is nonnegative. It is a contradiction with the fact that $j$ appears infinitely many times as an element of $a_i$.

After having reached a divisor $D'$ with all multidegrees equal to 0 but one equal to 1 (say $\delta_i(D')$) it is easy to get an equivalent divisor of the form $kE_j$ with $j=1,2,n-1$. Indeed, take $E_j$ the final component in the branch from $E_0$ containing $E_i$ (if $i=0$, pick any of them), and apply subsequently Lemma \ref{red3} in the other way, i.e. with $D'$ as $D+E$. This way we move the 1 one step each time towards $E_j$, and each time the coefficient of $E_j$ increases by one, hence we will end with a divisor of the form we wanted.
\end{proof}

\subsection{Reduction to $0$-graded piece.}

The results of the previous sections allow us to check if $\Phi_D$ is an isomorphism for every $D \in {\rm Pic}(\widetilde{\mathbb{D}_n}/\mathbb{D}_n) \cong \mathbb{Z}^n$ just checking it for the basic degrees $\{0,ke_1,ke_2,ke_{n-1} \,|\, k>0\}$. Since we already know that $\Phi_0$ is an isomorphism (by construction), we will reduce the remaining three cases to this (most basic) one.

\begin{proposition}  \label{prop-red-3}
Every degree $d \in \{ke_1,ke_2,ke_{n-1} \,|\, k>0\}$ is equivalent to $0$.
\end{proposition}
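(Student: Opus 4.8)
The plan is to establish the three families by induction on $k$, using in each case a short exact sequence built from multiplication by the relevant leaf variable, exactly in the spirit of Lemmas \ref{red2} and \ref{red3}. I would treat $ke_1$ in full detail; the case $ke_2$ is identical after exchanging the roles of $E_1$ and $E_2$ (the dual graph is symmetric under this swap), and $ke_{n-1}$ is handled the same way with $x_{n-1}$ and the curve $C_{n-1}$ in place of $x_1$ and $C_1$. Recall from the construction of $\Phi$ that $t_1 = \Phi(x_1) \in H^0(\widetilde{\mathbb{D}_n},L_1)$ vanishes exactly along the strict transform $C_1$ of an affine line in $\mathbb{D}_n$, that $C_1 \cong \mathbb{A}^1$, and that $C_1$ meets the exceptional divisor transversally in a single point of $E_1$. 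In particular $\mathcal{O}(C_1) \cong L_1$, so $kL_1 - C_1 \cong (k-1)L_1$.

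Twisting the structure sequence of $C_1 \subset \widetilde{\mathbb{D}_n}$ by $kL_1$, and using that $R$ is a domain (the defining relation is irreducible, so multiplication by $x_1$ is injective), I obtain a commutative ladder with exact rows:
{\footnotesize
$$
\xymatrix{
0 \ar[r] & R_{(k-1)e_1} \ar^{\cdot x_1}[r] \ar[d]^{\Phi_{(k-1)e_1}} & R_{ke_1} \ar[r] \ar[d]^{\Phi_{ke_1}} & Q \ar[r] \ar@{-->}[d]^{\bar\Phi} & 0 \\
0 \ar[r] & H^0(\widetilde{\mathbb{D}_n},(k-1)L_1) \ar^{\cdot t_1}[r] & H^0(\widetilde{\mathbb{D}_n},kL_1) \ar[r] & H^0(C_1,kL_1|_{C_1}) \ar[r] & H^1(\widetilde{\mathbb{D}_n},(k-1)L_1)
}
$$
}
For $k \geq 1$ the degree $(k-1)e_1$ is nef and (relatively) big, so $H^1(\widetilde{\mathbb{D}_n},(k-1)L_1) = 0$ by the relative Kawamata--Viehweg vanishing theorem, and the bottom cokernel is exactly $H^0(C_1,kL_1|_{C_1})$. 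The argument is then a five-lemma induction: the base case $k = 0$ is $\Phi_0$, which is an isomorphism by construction, and if $\Phi_{(k-1)e_1}$ is an isomorphism then so is $\Phi_{ke_1}$, provided $\bar\Phi$ is one.

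Everything therefore reduces to showing that $\bar\Phi \colon Q \to H^0(C_1,kL_1|_{C_1})$ is an isomorphism. Here $Q$ is spanned by the monomials of degree $ke_1$ not divisible by $x_1$ (using the relation to discard those containing $y_1 x_1^2$), while $C_1 \cong \mathbb{A}^1$ has coordinate ring $\mathbb{C}[u]$, with $u$ realized as the restriction of a degree-$0$ invariant such as $Z_3$. Fixing one monomial $M_0$ of degree $ke_1$ not divisible by $x_1$ whose restriction to $C_1$ is a nonzero constant, the monomials $M_0 Z_3^{\,j}$ (for $j \geq 0$) again lie in $Q$, since $Z_3$ has degree $0$ and involves $x_{n-1}$ rather than $x_1$, and they restrict to a basis $\{u^j\}$ of $\mathbb{C}[u]$; this gives surjectivity of $\bar\Phi$.

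The step I expect to be the real obstacle is the injectivity of $\bar\Phi$: one must show that no nontrivial combination of the $x_1$-free monomials of degree $ke_1$ restricts to zero on $C_1$, equivalently that these monomials are, modulo the relation, exactly the $M_0 Z_3^{\,j}$. This is the same explicit bookkeeping as in the proofs of Lemmas \ref{red2} and \ref{red3}: I would solve the linear system imposed by the degree $ke_1$ on the exponents $(a_i,b_i)$, propagate the conditions through the trivalent vertex $E_0$, and verify that the solutions with $b_1 = 0$ form a single one-parameter family indexed by the power of $Z_3$. Once this identification is carried out, the symmetric substitution handles $ke_2$ and the endpoint substitution (with $C_{n-1}$ and $x_{n-1}$) handles $ke_{n-1}$, completing all three cases.
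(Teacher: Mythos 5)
Your proposal follows essentially the same route as the paper's proof: the same exact sequence obtained by multiplying by $x_1$ (resp.\ $t_1$) and restricting to $C_1\cong\mathbb{A}^1$, vanishing of $H^1$ by relative Kawamata--Viehweg, and identification of the cokernel $Q$ with $\mathbb{C}[u]$ via the one-parameter family of $x_1$-free monomials obtained by multiplying a fixed monomial by powers of the degree-$0$ invariant (your $M_0Z_3^{\,j}$ are exactly the paper's $M_a$). The exponent bookkeeping you defer is precisely the computation the paper carries out, and it works as you expect, so the argument is correct.
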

\begin{proof}
All cases being analogous, we will consider the case $d=ke_1$ for some $k > 0$ and show that it is equivalent to $(k-1)e_1$, which is clearly enough.

First of all, recall that $C_1$ is an affine line intersecting transversely just $E_1$, and that $t_1$ is the section of $L_1$ vanishing exactly along $C_1$. Therefore we can consider the exact sequence of sheaves
$$0 \longrightarrow L_1^{\otimes(k-1)} \stackrel{\cdot t_1}{\longrightarrow} L_1^{\otimes k} \longrightarrow L_{1|C_1}^{\otimes k} \cong \mathcal{O}_{C_1}
\longrightarrow 0$$
where the last isomorphism holds because $C_1$ is affine.% Furthermore, we fix this isomorphism to be division by $s=(t_2t_{n-1}s_0s_2s_3\cdots s_{n-1})_{|C_1} \in H^0(C_1,L_1)$ which does not vanish along $C_1$ (the reasons for this choice will become apparent later).

As in the proofs of the previous results, there is an induced commutative diagram
{\scriptsize
$$
\xymatrix{ 0 \ar[r] & R_{(k-1)e_1} \ar^{\cdot x_1}[r] \ar[d]^{\Phi_{(k-1)e_1}} & R_{ke_1} \ar[r] \ar[d]^{\Phi_{ke_1}} & Q \ar[r] \ar@{-->}[d]^{\cong} & 0 \\
0 \ar[r] & H^0(\widetilde{\mathbb{D}_n}, L_1^{\otimes(k-1)}) \ar^{\cdot t_1}[r] & H^0(\widetilde{\mathbb{D}_n}, L_1^{\otimes k}) \ar[r] & H^0(C_1,\mathcal{O}_{C_1}) \ar[r] & H^1(\widetilde{\mathbb{D}_n}, L_1^{\otimes(k-1)})=0 
}$$
}
where as usual the first row is exact because the relation in $R$ is irreducible and $Q$ is defined as the corresponding cokernel, and the second row is obviously exact as well.

It only remains to see that the rightmost map is an isomorphism. In this case, $Q$ is the piece of degree $ke_1$ of the quotient
$$ R/x_1R \cong \mathbb{C}[x_2,x_{n-1},y_0,\ldots,y_{n-1}]/(y_2x_2^2+y_3\cdots y_{n-1}^{n-3}x_{n-1}^{n-2}).$$
Therefore, a basis of $Q$ is given by the monomials $M = x_2^{b_2}x_{n-1}^{b_{n-1}}y_0^{a_0}\cdots y_{n-1}^{a_{n-1}}$ of degree $ke_1$ which are not divisible by $y_2x_2^2$. The condition $\deg(M) = ke_1$ is equivalent to the linear system of equations in the exponents
$$\begin{array}{l} 0 = -2a_0+a_1+a_2+a_3 \\ k = a_0-2a_1 \\ 0 = a_0-2a_2+b_2 \\ 0 = a_0-2a_3+a_4 = a_3-2a_4+a_5 = \ldots = a_{n-2}-2a_{n-1}+b_{n-1} \end{array}$$
which easily implies that both $a_0$ and $b_2$ must have the same parity as $k$. Both cases being analogous, we will proof the case $k$ odd.

Firstly, $M$ is not divisible by $y_2x_2^2$ if and only if either $a_2 = 0$ or $b_2 \leq 1$. Since $2a_2 = a_0+b_2 \geq 2$ implies $a_2 > 0$,  it must hold $b_2 \leq 1$, and therefore $b_2=1$. Now, the solutions of the first three equations are given by
$$b_2=1, \quad a_2=a_1+1, \quad a_0=2a_1+k \quad \text{and} \quad a_3=2a_1+\frac{3k-1}{2}$$
and the last row of equations implies that $a_0,a_3,a_4,\ldots,a_{n-1},b_{n-1}$ is an arithmetic progression of difference $a_3-a_0=\frac{k-1}{2} \geq 0$. Therefore, since all the solutions are non-negative if $a=a_{-1}\geq0$, $Q$ has a countable basis given by the monomials
$$M_a=\left(x_2x_{n-1}^{1+\frac{n(k-1)}{2}}y_0^ky_2^{\frac{k+1}{2}}y_3^{\frac{3(k-1)}{2}+1}\cdots y_{n-1}^{\frac{(n-1)(k-1)}{2}+1}\right)(x_{n-1}^2y_0^2y_1y_2y_3^2\cdots y_{n-1}^2)^a.$$

On the other hand $H^0(C_1,\mathcal{O}_{C_1}) \cong \mathbb{C}[T]$, where $T$ is any function on $C_1$ vanishing of order 1 at $C_1 \cap E_1$ (or any other point). Following the diagram we see that the image of $M_a$ is $$u(t_{n-1}^2s_0^2s_1s_2s_3^2\cdots s_{n-1}^2)_{|C_1}^a$$ where $u$ is an invertible function depending only on the isomorphism $L_{1|C_1}^{\otimes k} \cong \mathcal{O}_{C_1}$ and $t=(s_0^2s_1s_2s_3^2\cdots s_{n-1}^2t_{n-1}^2)_{|C_1} \in H^0(C_1,\mathcal{O}_{C_1})$ vanishes of order 1 at $C_1 \cap E_1$ because of the factor $s_1$ (the rest of factors do not vanish at any point of $C_1$). So we can divide by $u$ take $T=t$ to get $Q \cong \mathbb{C}[T]$, finishing the proof.
\end{proof}

We are now able to state the main result for the $\mathbb{D}_n$ singularities.

\begin{theorem}
The Cox ring of the $\mathbb{D}_n$ singularity (following Definition \ref{def-Cox}) is isomorphic to
$$\mathbb{C}[x_1,x_2,x_{n-1},y_0,y_1,\ldots,y_{n-1}]/(y_1x_1^2+y_2x_2^2+y_3y_4^2 \cdots y_{n-1}^{n-3}x_{n-1}^{n-2}).$$
\end{theorem}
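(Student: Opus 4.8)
The plan is to show that the graded ring homomorphism $\Phi$ constructed in Section 4.2 is an isomorphism, and for this it is enough to verify that each homogeneous restriction $\Phi_D \colon R_D \to \mathrm{Cox}(\mathbb{D}_n)_D$ is an isomorphism for every degree $D \in \mathrm{Pic}(\widetilde{\mathbb{D}_n}/\mathbb{D}_n) \cong \mathbb{Z}^n$. Indeed, both $R$ and $\mathrm{Cox}(\mathbb{D}_n)$ are graded by $\mathbb{Z}^n$ and $\Phi$ respects this grading, so $\Phi$ is bijective precisely when it is bijective on every homogeneous component. Thus the whole statement of the theorem reduces to the family of assertions ``$\Phi_D$ is an isomorphism'', indexed by $D$.

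First I would invoke Proposition \ref{prop-red-2}: every divisor $D$ is equivalent to one whose degree lies in the list $\{0,\,ke_1,\,ke_2,\,ke_{n-1} \mid k>0\}$. By the definition of equivalence, $\Phi_D$ is an isomorphism if and only if $\Phi_{D'}$ is, so we may replace $D$ by such a $D'$. Next, for the three nontrivial families $ke_1, ke_2, ke_{n-1}$, I would apply Proposition \ref{prop-red-3}, which shows that each of them is equivalent to the zero degree. Since equivalence is visibly transitive (it is a biconditional between the two isomorphism statements), chaining these two steps shows that every degree $D$ is equivalent to $0$.

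The base case is already secured: by the construction of $\Phi$, its piece $\Phi_0$ of degree $(0,\dots,0)$ is an isomorphism. Combining this with the reductions above, $\Phi_D$ is an isomorphism for every $D$, whence $\Phi$ is an isomorphism of graded rings. This identifies $\mathrm{Cox}(\mathbb{D}_n)$ with $$R = \mathbb{C}[x_1,x_2,x_{n-1},y_0,\dots,y_{n-1}]/(y_1x_1^2+y_2x_2^2+y_3y_4^2\cdots y_{n-1}^{n-3}x_{n-1}^{n-2}),$$ which is exactly the claimed description following Definition \ref{def-Cox}.

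Because the substantive analytic and combinatorial work has already been discharged in the reduction lemmas (Lemmas \ref{red1}, \ref{red2}, \ref{red3}) and propositions (Propositions \ref{prop-red-1}, \ref{prop-red-2}, \ref{prop-red-3}), the final assembly is essentially bookkeeping, and I do not expect a genuine obstacle here. The one point I would check carefully is that the reduction applies uniformly to \emph{all} degrees, including those with negative components: this is guaranteed by Proposition \ref{prop-red-1}, which first lands any $D$ in the nef cone, so that the statements of Propositions \ref{prop-red-2} and \ref{prop-red-3} (phrased for arbitrary $D$) really do cover every degree. The real difficulty therefore lies not in this concluding step but in the finiteness of the reduction procedures, whose termination arguments ensure that each $D$ reaches a basic degree in finitely many equivalence-preserving steps.
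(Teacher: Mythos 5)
Your proposal is correct and follows exactly the same route as the paper, which proves the theorem by simply chaining Propositions \ref{prop-red-1}, \ref{prop-red-2} and \ref{prop-red-3} together with the fact that $\Phi_0$ is an isomorphism by construction. Your write-up merely spells out the bookkeeping that the paper leaves implicit, so there is nothing to add or correct.
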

\begin{proof}
It is a simple consequence of Propositions \ref{prop-red-1}, \ref{prop-red-2} and \ref{prop-red-3}.
\end{proof}

\begin{remark} \label{rem-rule}
It is worth noting that the relation $y_1x_1^2+y_2x_2^2+y_3y_4^2 \cdots y_{n-1}^{n-3}x_{n-1}^{n-2}$ can be easily read from the dual graph of the singularity. Indeed, consider the triple node $E_0$ as the root, and assign to each branch (extended with the extra variable $x_i$) the monomial on the corresponding variables with the distance to $E_0$ as exponents. For example, the branch consisting of $E_1$ gives the monomial $y_1x_1^2$, and so on. Then, the relation is obtained simply by adding these monomials. This method also works in the $\mathbb{E}$ case, as will be shown in the last section.
\end{remark}

\begin{remark}
There is a good geometrical reason to justify that ${\rm Cox}(\mathbb{D}_n)$, must have a relation of the kind above: For each of the components $E_1$, $E_2$ and $E_3$ intersecting $E_0$, there is a section $s_i$ in the Cox ring vanishing exactly along them. Their restrictions to $E_0$ are sections of $H^0(E_0,\mathcal{O}_{E_0}(1)) \cong \mathbb{C}^2$, so they have to be linearly dependent. This dependence relation, once adjusted so that it defines a global section of some line bundle (this is where the rest of the variables come from) gives a relation in the Cox ring similar to the one we obtained. Moreover, this argument shows that in the Cox ring of any singularity there must be at least one relation for each node in the dual graph with degree greater than 2.
\end{remark}

\section{Singularities of type $\mathbb{E}$ and open questions.}

In this last section, we will expose the results about type $\mathbb{E}$ singularities which lead to the explicit computation of its Cox ring. We will also give some example for which the rule explained in Remark \ref{rem-rule} does not apply, and a short list of open questions and next steps to do in the study of Cox rings of more general surface singularities.

The (affine) $\mathbb{E}_n$ singularities are defined for $n=6,7,8$ as the hypersurfaces
$$\mathbb{E}_6 = \{x^4+y^3+z^2=0\} \subseteq \mathbb{C}^3$$
$$\mathbb{E}_7 = \{x^3y+y^3+z^2=0\} \subseteq \mathbb{C}^3$$
$$\mathbb{E}_8 = \{x^5+y^3+z^2=0\} \subseteq \mathbb{C}^3$$
and the dual graphs of its minimal resolution is of the form

%\vspace{5mm}
\begin{center}
\begin{tikzpicture}[scale=1]
\tikzstyle{every node}=[draw,circle,fill=white,minimum size=5mm,inner sep=0pt]
%\tikzstyle{every node}=[draw,shape=circle, very thin, text width=2em, text badly centered, inner sep=3pt]; % shading=ball, ball color=white,
\path (180:2cm) node (x0) {\tiny $E_3$};
\path (180:1cm) node (x1) {\tiny $E_2$};
\path (270:1cm) node (x2) {\tiny $E_1$};
\path (0:0cm) node (y0) {\tiny $E_0$};
\path (0:1cm) node (y1) {\tiny $E_4$};
\path (0:3cm) node (yn1) {\tiny $E_{n-1}$};
\draw 
(x0) -- (x1)
(x1) -- (y0)
(x2) -- (y0)
(y0) -- (y1);
\draw[dashed] (y1) -- (yn1);
\end{tikzpicture}
\end{center}
%\vspace{1cm}

\begin{lemma}
Numbering the nodes as in the figure, considering variables $y_i$ for each $E_i$ and $x_1,x_3,x_{n-1}$ as usual, and considering the action of $T=\mathbb{C}^{*n}$ given by the corresponding (extended) intersection matrix, the rings of invariants are as given in the table, monomials giving the isomorphisms are also included.

%{\small
%\begin{tabular}{|l|l|l|}
%\hline
%$\mathbb{E}_6$ & $\mathbb{E}_7$ & $\mathbb{E}_8$ \\
%\hline
%$\mathbb{C}[Z_1,Z_2,Z_3,Z_4]/(Z_2^3-Z_3Z_4)$ & $\mathbb{C}[Z_1,Z_2,Z_3,Z_4]/(Z_2^2-Z_3Z_4)$ & $\mathbb{C}[Z_1,Z_2,Z_3]$ \\
%\hline
%$Z_1=y_0^3y_1^2y_2^2y_3y_4^2y_5x_1$ & $Z_1=y_0^4y_1^2y_2^3y_3^2y_4^3y_5^2y_6x_3$ & $Z_1=y_0^{15}y_1^8y_2^{10}y_3^5y_4^{12}y_5^9y_6^6y_7^3x_1$ \\
%$Z_2=y_0^4y_1^2y_2^3y_3^2y_4^3y_5^2x_3x_5$ & $Z_2=y_0^{12}y_1^7y_2^8y_3^4y_4^9y_5^6y_6^3x_1^2$ & $Z_2=y_0^6y_1^3y_2^4y_3^2y_4^5y_5^4y_6^3y_7^2x_7$ \\
%$Z_3=y_0^6y_1^3y_2^4y_3^2y_4^5y_5^4x_3^3$ & $Z_3=y_0^9y_1^5y_2^6y_3^3y_4^7y_5^5y_6^3x_1x_6$ & $Z_3=y_0^10y_1^5y_2^7y_3^4y_4^8y_5^6y_6^4y_7^2x_3$ \\
%$Z_4=y_0^6y_1^3y_2^5y_3^4y_4^4y_5^2x_5^3$ & $Z_4=y_0^6y_1^3y_2^4y_3^2y_4^5y_5^4y_6^3x_6^2$ & \\
%\hline
%\end{tabular}
%}

{%\small
\begin{tabular}{|l|l|l|}
\hline
$n$ & Ring of invariants & Isomorphism \\
\hline
 &  & $Z_1=y_0^3y_1^2y_2^2y_3y_4^2y_5x_1$ \\
6 & $\mathbb{C}[Z_1,Z_2,Z_3,Z_4]/(Z_2^3-Z_3Z_4)$ & $Z_2=y_0^4y_1^2y_2^3y_3^2y_4^3y_5^2x_3x_5$ \\
 &  & $Z_3=y_0^6y_1^3y_2^4y_3^2y_4^5y_5^4x_3^3$ \\
 &  & $Z_4=y_0^6y_1^3y_2^5y_3^4y_4^4y_5^2x_5^3$ \\
\hline
 &  & $Z_1=y_0^4y_1^2y_2^3y_3^2y_4^3y_5^2y_6x_3$ \\
7 & $\mathbb{C}[Z_1,Z_2,Z_3,Z_4]/(Z_2^2-Z_3Z_4)$ & $Z_2=y_0^{12}y_1^7y_2^8y_3^4y_4^9y_5^6y_6^3x_1^2$ \\
 &  & $Z_3=y_0^9y_1^5y_2^6y_3^3y_4^7y_5^5y_6^3x_1x_6$ \\
 &  & $Z_4=y_0^6y_1^3y_2^4y_3^2y_4^5y_5^4y_6^3x_6^2$ \\
\hline
 &  & $Z_1=y_0^{15}y_1^8y_2^{10}y_3^5y_4^{12}y_5^9y_6^6y_7^3x_1$ \\
8 & $\mathbb{C}[Z_1,Z_2,Z_3]$ & $Z_2=y_0^6y_1^3y_2^4y_3^2y_4^5y_5^4y_6^3y_7^2x_7$ \\
 &  & $Z_3=y_0^{10}y_1^5y_2^7y_3^4y_4^8y_5^6y_6^4y_7^2x_3$ \\
\hline
\end{tabular}
}
\end{lemma}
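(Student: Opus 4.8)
The plan is to prove each of the three cases $n=6,7,8$ by the same computational method already used in Lemmas \ref{lem_inv_D2k} and \ref{lem_inv_D2k+1}: compute the monoid of degree-$0$ monomials, find a minimal generating set, identify the corresponding invariant monomials with the variables $Z_i$, and then verify that the listed relations generate all relations among them. The only genuine difference from the $\mathbb{D}_n$ analysis is that the dual graph now has its triple node $E_0$ adjacent to a \emph{longer} short branch (the chain $E_1$, and the chain $E_3$--$E_2$--$E_0$, plus the long tail $E_4,\ldots,E_{n-1}$), so the linear-algebra bookkeeping is slightly heavier, but the structure is identical.

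\textbf{Setting up the cone of invariants.}
First I would write down, for each $n\in\{6,7,8\}$, the condition that a monomial $m=x_1^{b_1}x_3^{b_3}x_{n-1}^{b_{n-1}}y_0^{a_0}\cdots y_{n-1}^{a_{n-1}}$ have degree $0$ under the action given by the extended intersection matrix. As in the $\mathbb{D}_n$ case this is a system of linear equations in the exponents (one per exceptional curve $E_i$, of the form $\sum_j (E_i\cdot E_j)\,a_j + (\text{contribution of the }x\text{-variables}) = 0$), and solving it expresses all exponents affinely in terms of a small number $r$ of free parameters. Since $\operatorname{rank}$ of the intersection form is $n$ and we have $n+3$ variables, the solution space is $3$-dimensional, so $r=3$ in every case, just as before. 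The invariant monomials then form the intersection of this $3$-dimensional lattice with the positive orthant $\{a_i,b_i\ge 0\}$, i.e.\ a rational polyhedral cone in $\mathbb{Z}^3$.

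\textbf{Generators and relations.}
Next I would compute the Hilbert basis of this cone explicitly. Each minimal generator is a lattice point of the cone not expressible as a sum of two nonzero cone points; translating each such generator back through the parametrization yields one of the invariant monomials $Z_i$ in the table, and I would check by direct substitution that the tabulated monomials are exactly these generators (the simplest verification is to confirm each $Z_i$ lies in the cone, that they generate it over $\mathbb{N}$, and that none is redundant). Having the generators, the relations follow from the syzygies of the toric ideal: I would verify that the binomials listed (e.g.\ $Z_2^3-Z_3Z_4$ for $n=6$, $Z_2^2-Z_3Z_4$ for $n=7$, and \emph{no} relation for $n=8$, where the three generators are already algebraically independent) hold identically on monomials, and conversely that they generate the full lattice of relations among the exponent vectors. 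The $n=8$ case is the cleanest: there the cone is free on three generators, so the invariant ring is a polynomial ring $\mathbb{C}[Z_1,Z_2,Z_3]$ and there is nothing to check beyond exhibiting the three generators.

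\textbf{Main obstacle.}
The hard part will be the relation-generation step for $n=6,7$, namely showing that the listed binomials generate \emph{all} relations rather than merely holding among the $Z_i$; this is the toric syzygy computation. In practice it reduces to checking that the map $\mathbb{N}^{(\text{number of }Z_i)}\to(\text{cone})$ has kernel generated by the displayed binomials, which one verifies by confirming the quotient ring has the same Hilbert function as the invariant ring in each graded piece. For $n=6$ and $n=7$ the codimension is one (a hypersurface $Z_2^3=Z_3Z_4$, resp.\ $Z_2^2=Z_3Z_4$, in $\mathbb{C}^4$), so a single irreducible relation suffices and the check is routine; the genuinely laborious bookkeeping is, as the authors remark, the case-by-case tracking of exponents through the triple node $E_0$, exactly paralleling the even/odd split in the $\mathbb{D}_n$ lemmas. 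Since each of $\mathbb{E}_6,\mathbb{E}_7,\mathbb{E}_8$ has its own exponent pattern, the computation must be redone three times, which is why the statement is organized as a single lemma with a table rather than a uniform formula.
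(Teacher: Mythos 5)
Your proposal is correct and follows exactly the route the paper intends: the paper omits the proof of this lemma, stating only that it proceeds as in Section 4, and your plan — solving the degree-zero condition to get a rank-$3$ cone of exponent vectors, extracting its Hilbert basis as the generators $Z_i$, and checking that the single binomial (none for $n=8$) generates the codimension-one toric ideal — is precisely the method of Lemmas \ref{lem_inv_D2k} and \ref{lem_inv_D2k+1} transported to the $\mathbb{E}$-type dual graphs. The only caveat is that a complete write-up would still require carrying out the exponent bookkeeping through the trivalent node separately for each of $n=6,7,8$, but the strategy is sound and matches the authors' intent.
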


As for the $\mathbb{D}_n$ cases, we need to intersect the variety corresponding to the previous rings with some hypersurface in order to obtain (surfaces isomorphic to) the singularities. The choices are summarized in the next
\begin{lemma}
\begin{itemize}
\item $\mathbb{E}_6$ is isomorphic to the intersection of $\{Z_2^3-Z_3Z_4=0\}$ with $H_6=\{Z_1^2+Z_3+Z_4=0\}$.
\item $\mathbb{E}_7$ is isomorphic to the intersection of $\{Z_3^2-Z_2Z_4=0\}$ with $H_7=\{Z_1^3+Z_2+Z_4^2=0\}$.
\item $\mathbb{E}_8$ is isomorphic to $H_8=\{Z_2^5+Z_3^3+Z_1^2=0\}$.
\end{itemize}
\end{lemma}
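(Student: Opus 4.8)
The plan is to handle the three cases uniformly. Taking $\operatorname{Spec}$ of each ring of invariants gives an affine variety $V_n$ --- equal to $\mathbb{C}^3$ for $n=8$ and to a single hypersurface in $\mathbb{C}^4$ for $n=6,7$ --- and in every case the extra equation $H_n$ is \emph{affine-linear in one of the coordinates} $Z_i$. This is the structural feature I would exploit: solving $H_n$ for that coordinate writes it as a polynomial in the others, so the linear projection forgetting $Z_i$ restricts to an isomorphism from $V_n\cap H_n$ onto an explicit hypersurface in $\mathbb{C}^3$ (with polynomial inverse obtained by plugging the solved expression back in). One then identifies this hypersurface with the standard equation of $\mathbb{E}_n$ by a coordinate change. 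Because the elimination is an honest isomorphism, \emph{no} spurious components appear, which is exactly what distinguishes this situation from the $\mathbb{D}_{2k+1}$ case: there the base variety was cut out by several equations, the naive intersection with one hypersurface broke into three components, and the extra equations \eqref{eq-2} were needed to discard two of them.

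For $\mathbb{E}_8$ there is essentially nothing to do: the ring of invariants is the polynomial ring $\mathbb{C}[Z_1,Z_2,Z_3]$, so $V_8=\mathbb{C}^3$ and $H_8=\{Z_2^5+Z_3^3+Z_1^2=0\}$ becomes $x^5+y^3+z^2=0$ after renaming $(x,y,z)=(Z_2,Z_3,Z_1)$. For $\mathbb{E}_7$ I would solve $H_7$ for $Z_2=-Z_1^3-Z_4^2$ and substitute into the defining relation $Z_3^2-Z_2Z_4=0$, producing
$$Z_3^2+Z_1^3Z_4+Z_4^3=0$$
in the coordinates $(Z_1,Z_3,Z_4)$; the renaming $(x,y,z)=(Z_1,Z_4,Z_3)$ gives precisely $x^3y+y^3+z^2=0$, the equation of $\mathbb{E}_7$.

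The only case needing a nontrivial coordinate change is $\mathbb{E}_6$. Solving $H_6$ for $Z_4=-Z_1^2-Z_3$ and substituting into $Z_2^3-Z_3Z_4=0$ yields
$$Z_2^3+Z_3^2+Z_1^2Z_3=0$$
in $(Z_1,Z_2,Z_3)$. Completing the square in $Z_3$ (valid since $\operatorname{char}\mathbb{C}\neq 2$) by setting $w=Z_3+\tfrac12 Z_1^2$ turns this into $w^2+Z_2^3-\tfrac14 Z_1^4=0$, and a final rescaling $Z_1\mapsto\lambda Z_1$ with $\lambda^4=-\tfrac14$ (a choice that exists over $\mathbb{C}$) gives $z^2+y^3+x^4=0$ under $(x,y,z)=(Z_1,Z_2,w)$, i.e. $\mathbb{E}_6$.

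I do not anticipate a serious obstacle, since these are elementary eliminations; the one point I would verify carefully is precisely the claim implicit above, namely that a \emph{single} hypersurface $H_n$ already cuts $\mathbb{E}_n$ out of $V_n$ with no extra components. This is guaranteed by the linearity of $H_n$ in the eliminated variable: the projection is a biregular isomorphism onto the reduced hypersurface, so the intersection is automatically irreducible and isomorphic to $\mathbb{E}_n$. As an independent consistency check one can read each reduced equation as a monic quadratic in one variable and observe that its discriminant ($Z_1^4-4Z_2^3$ for $\mathbb{E}_6$, and $-4Z_4(Z_4^2+Z_1^3)$ for $\mathbb{E}_7$) is not a square in the remaining polynomial ring, confirming irreducibility directly.
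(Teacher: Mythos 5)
Your proof is correct, and since the paper omits the proof of this lemma entirely (deferring to the analogy with Section 4), your elimination argument is exactly the natural counterpart of the paper's treatment of $\mathbb{D}_{2k}$ — a linear change of coordinates matching the standard equation — and you rightly note that the linearity of $H_n$ in one variable is what prevents the spurious components that forced the extra equations in the $\mathbb{D}_{2k+1}$ case. The only slip is the scaling constant in the $\mathbb{E}_6$ step: substituting $Z_1\mapsto\lambda Z_1$ into $-\tfrac14 Z_1^4$ requires $\lambda^4=-4$ rather than $-\tfrac14$ (the latter is correct only under the convention $x=\lambda Z_1$), which is immaterial over $\mathbb{C}$.
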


Substituting in the equations of $H_n$ the corresponding expressions of the $Z_i$ in terms of $x_i$ and $y_i$, and then cutting common factors of the $y_i$, we obtain in the three cases the expression $$y_1x_1^2+y_2y_3^2x_3^3+y_4y_5^2\cdots y_{n-1}^{n-4}x_{n-1}^{n-3}=0,$$ which is analogous to those obtained in the previous cases. Since the reduction results (lemmas and propositions in the previous sections) are still true (with minor changes) in these three cases, we conclude with next

\begin{theorem}
The Cox ring of the $\mathbb{E}_n$ singularity (following Definition \ref{def-Cox}) is isomorphic to
$$\mathbb{C}[x_1,x_2,x_{n-1},y_0,y_1,\ldots,y_{n-1}]/(y_1x_1^2+y_2y_3^2x_3^3+y_4y_5^2\cdots y_{n-1}^{n-4}x_{n-1}^{n-3}).$$
\end{theorem}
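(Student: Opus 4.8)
The plan is to follow, for each $n \in \{6,7,8\}$ separately, exactly the scheme that worked for $\mathbb{D}_n$: build a graded homomorphism
$$\Phi\colon R = \mathbb{C}[x_1,x_3,x_{n-1},y_0,\ldots,y_{n-1}]/(y_1x_1^2+y_2y_3^2x_3^3+y_4y_5^2\cdots y_{n-1}^{n-4}x_{n-1}^{n-3}) \longrightarrow {\rm Cox}(\mathbb{E}_n)$$
and prove it is an isomorphism degree by degree. As in the $\mathbb{D}_n$ case I would send each $y_i$ to the section $s_i\in H^0(\widetilde{\mathbb{E}_n},\mathcal{O}(E_i))$ vanishing exactly along $E_i$, and send the three extra variables $x_1,x_3,x_{n-1}$ (attached to the leaves $E_1,E_3,E_{n-1}$) to sections of $L_1,L_3,L_{n-1}$ vanishing along the strict transforms of the affine lines cut out on $\mathbb{E}_n$ by the appropriate coordinate equations $Z_j=0$ used to define the hypersurfaces $H_6,H_7,H_8$ of the preceding lemma. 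With these vanishing loci the relation is sent to $0$ by construction, so $\Phi$ is a well-defined homomorphism whose degree-$0$ piece $\Phi_0$ is an isomorphism (it identifies the ring of invariants with the coordinate ring of $\mathbb{E}_n$), giving the base case of the induction.

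The core is then the same three-stage reduction. First, the analog of Lemma \ref{red1} and Proposition \ref{prop-red-1} shows every degree is equivalent to a nef one: the exact sequence $0\to R_{D-E_i}\stackrel{\cdot y_i}{\longrightarrow} R_D\to Q\to 0$ carries over unchanged, with $Q=0$ because $\delta_i(D)<0$ forces the exponent of $y_i$ to be positive, while $H^0(E_i,\mathcal{O}_{E_i}(d_i))=0$. Second, the analogs of Lemmas \ref{red2}, \ref{red3} and of Proposition \ref{prop-red-2} reduce matters to the basic degrees $\{0,ke_1,ke_3,ke_{n-1}\mid k>0\}$, one family for each leaf. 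For these I reuse the exact sequences of Lemmas \ref{red2} and \ref{red3}, invoking relative Kawamata-Viehweg vanishing (still valid, since $\mathbb{E}_n$ is rational and $\mathbb{E}_n$ is affine) to kill $H^1(\widetilde{\mathbb{E}_n},D)$, and then match $\dim Q$ with $\dim H^0(E,\mathcal{O}_E(D+E))$ by counting the admissible monomials. Finally, the analog of Proposition \ref{prop-red-3} reduces each $ke_i$ to $(k-1)e_i$, and hence to $0$, through the sequence $0\to L_i^{\otimes(k-1)}\stackrel{\cdot t_i}{\longrightarrow} L_i^{\otimes k}\to \mathcal{O}_{C_i}\to 0$ together with the explicit identification $Q\cong\mathbb{C}[T]$. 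Since $\Phi_0$ is already known to be an isomorphism, combining the three reductions proves the theorem.

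The main obstacle — and the reason the authors only claim the proofs go through ``with minor changes'' — is that, unlike $\mathbb{D}_n$, whose triple node carries two symmetric length-one branches, the node $E_0$ of $\mathbb{E}_n$ carries three branches of pairwise distinct lengths $1$, $2$ and $n-4$. This asymmetry is where all the bookkeeping thickens. The cokernel-dimension counts in the analogs of Lemmas \ref{red2} and \ref{red3} require tracking how the degree constraints propagate through $E_0$, which now splits into more cases and must be checked separately for $n=6,7,8$ rather than via the single even/odd dichotomy of $\mathbb{D}_n$. Likewise, the termination monovariant of Proposition \ref{prop-red-2} — for $\mathbb{D}_n$ the sum $\frac{1}{2}(d_1+d_2)+\sum_{i\neq 1,2}d_i$ — must be replaced by a weighted sum $\sum_i w_i d_i$ whose weights are adapted to the three branch lengths (generalizing the fractional weights on $E_1,E_2$) so that it stays nonnegative, never increases under the moves, and strictly decreases each time the currently top-ranked index is used. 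Finally, the computation in the Proposition \ref{prop-red-3} analog of the degree-$ke_i$ part of $R/x_iR$ depends on which branch the leaf $E_i$ closes off, so it must be carried out afresh for each of $E_1$, $E_3$ and $E_{n-1}$. None of these steps needs a genuinely new idea beyond the $\mathbb{D}_n$ argument; they are simply longer, branch-length-dependent calculations, which is exactly why they are omitted.
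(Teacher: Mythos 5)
Your proposal follows exactly the strategy the paper itself uses (and deliberately omits): the same graded homomorphism $\Phi$ built from the sections $s_i$ and the leaf sections $t_1,t_3,t_{n-1}$, and the same three-stage reduction to nef degrees, to the basic degrees $\{0,ke_1,ke_3,ke_{n-1}\}$, and finally to degree $0$, with the branch-length-dependent bookkeeping (cokernel dimension counts, adapted termination monovariant) redone case by case for $n=6,7,8$. This matches the paper's own (omitted) argument, and your identification of where the $\mathbb{D}_n$ computations must be modified is exactly the reason the authors state that ``each of the three cases need specific proofs at some point.''
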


\begin{remark} \label{counterex}
The rule of Remark \ref{rem-rule} is not valid of every singularity, even those with just one trivalent node. For example, consider the singularity $X$ obtained as the contraction of the configuration of $(-2)$-curves shown in the next figure, with three branches of lenght 2. In this case, the Cox ring is not isomorphic to the analogous candidate $\mathbb{C}[x_2,x_4,x_6,y_0,\ldots,y_6]/(y_1y_2^2x_2^3+y_3y_4^2x_4^3+y_5y_6^2x_6^3)$. Indeed, what is not true are the reduction Lemmas. For example, we will show that Lemma \ref{red2} does not hold. Suppose we have a divisor $D$ of degree $2e_7$ (according to the numbering shown in the figure) and we try to reduce to the degree $e_6$ by adding $E_7$. If we look at the two cokernels in the diagram (\ref{Exact-seq}), the second one is $H^0(E_7,\mathcal{O}(0) \cong \mathcal{C}$, and so $Q$ should be one dimensional. However, it is easy to see that there is no monomial of degree $e_6$ with no $y_7$ (so that it does not belong to the image of multiplication by $y_7$). The reason is that the other leaves are ``too long'', and then the conditions of the exponents to be non-negative are too restrictive.
\end{remark}

%\vspace{1cm}
\begin{center}
\begin{tikzpicture}[scale=1]
\tikzstyle{every node}=[draw,circle,fill=white,minimum size=5mm,inner sep=0pt]
%\tikzstyle{every node}=[draw,shape=circle, minimum size=5pt, very thin, text width=2em, text badly centered, inner sep=3pt]; % shading=ball, ball color=white,
\path (180:1cm) node (x1) {\tiny $E_1$};
\path (180:2cm) node (x0) {\tiny $E_2$};
\path (270:1cm) node (x2) {\tiny $E_3$};
\path (270:2cm) node (x3) {\tiny $E_4$};
\path (0:0cm) node (y0) {\tiny $E_0$};
\path (0:1cm) node (y1) {\tiny $E_5$};
\path (0:2cm) node (y2) {\tiny $E_6$};
\path (0:3cm) node (x4) {\tiny $E_7$};
\draw 
(x0) -- (x1)
(y2) -- (x4)
(x1) -- (y0)
(x2) -- (y0)
(x3) -- (x2)
(y0) -- (y1)
(y1) -- (y2);
\end{tikzpicture}
\end{center}
%\vspace{1cm}

We finish the paper with some open questions, all concerning more complicated kinds of singularities, which could lead to generalizations of our results.

\begin{question}
Compute Cox rings of Hirzebruch-Jung singularities (see \cite{BHPVV}, section III.5), which are singularities whose dual graph is still a line, like in the $\mathbb{A}_n$ case, but some of the exceptional curves have self-intersection smaller than $-2$.
\end{question}

\begin{question}
Compute Cox rings of singularities all whose exceptional curves are still $(-2)$-curves, but whose dual graph is not a Dynkin diagram. For example,
\begin{enumerate}
\item if the tree has just a trivalent node, what happens if there is no branch of lenght 1? (as in Remark \ref{counterex})
\item what happens if there are more than one trivalent node in the dual graph, but still no node with higher valency?
\item what happens if there is still one node with valency greater than 2, but it has valency 4,5,6,...?
\item what happens in general case?
\end{enumerate}
\end{question}

%\comment{Here I would state (without proof) the analogous results of this case: what are the ring of invariantsand which sections do we need to take in order to get each singularity, what are the basic cases to which wehave to reduce, and the final result. I would also add some remark/example illustrating some dual graph to which the procedure above does not work. What do you think?}

%In the $E_6,E_7$ and $E_8$ case general conjecture that $Cox=...$ turns out to be false. In the case of $E_8$ let us look at the degree $(a_0,a_1,a_2,a_3,a_4,b_1,b_2,c_1,c_2,c_3)=(0,0,0,0,2,0,0,0,0,0,0)$. We do reduction to degree $(0,0,0,1,0,0,0,0,0,0,0)$ by adding $E_{a4}$. We look at the exact sequences and we should have that $dimQ=1$, so in the monomial we have to have (after looking at the degree) $(a_0,a_1,a_2,a_3,a_4,A_5, b_1,b_2,B_3, c_1,c_2,c_3,C_4)=(3,2,1,0,0,0, a,2a-3,3a-6, 4-a,5-2a,6-3a,7-4a)$ but $3a-6,7-4a$ can not be together nonnegative, so $dimQ=0$ so $\Phi_{[D]},\Phi_{[D+E]}$ can not be together isomorphisms.

\section*{Acknowledgements}

%{\bf Acknowledgements:} We would like to thank all the people who made this work possible: the organizers of PRAGMATIC 2010 for giving us the opportunity to attend the school, Professor Luis Sol\'a-Conde for his un invaluable clarifications almost every night, and very specially Professor Jaros\l aw Wi\'sniewski for proposing us this problem and encouraging and helping us to solve it, throwing light when everything was dark. The first author was supported by the grant of the Universit� degli Studi di Trento (Italy). The second author was supported by the grant \comment{find!} of the Spanish Ministerio de Educaci\'on. The third author is supported by the grant of Polish MNiSzW N N201 413139. \comment{Add your information}

We would like to thank very much Jaros\l aw Wi\'sniewski and Luis Sol\'a-Conde for introducing us to this
subject. We are also grateful for many useful and interesting talks. We would also like to thank Maria Donten-Bury, who carefully read the published version of this paper, pointing out and solving some small mistakes.


\begin{thebibliography}{9}

\bibitem{ArDeHaLa} I.~Arzhantsev, U.~Derenthal, J.~Hausen, A.~Laface, \emph{Cox Rings}, arXiv:1003.4229.
\bibitem{BHPVV} W.P.~Barth, K.~Hulek, C.A.M.~Peters, A.~Van de Ven, \emph{Compact Complex Surfaces}, second edition. Springer-Verlag Berlin, 2004.
\bibitem{CastTeve} A.M.~Castravet, J.~Tevelev, \emph{Hilbert's 14-th problem and Cox rings}, Compos. Math. 142 (2006), no.6, 1479-1498.
\bibitem{CLS} D.~Cox, J.~Little, H.~Schenck, \emph{Toric Varieties}, author web page.
\bibitem{Cox} D.~Cox, \emph{The homogeneous coordinte ring of a toric variety}, J. Algebraic Geom. 4(1), (1995), 12-50.
\bibitem{DolgHu} I.~Dolgachev, Y.~Hu \emph{Variation of geometric invariant theory quotients}, Inst. Haut Sci. Publ. Math. 87 (1998), 5-56.
\bibitem{HuKeel} Y.~Hu, X.~Keel, \emph{Mori dream spaces and GIT}, Michigan math. J. 48, (2000), 331-348.
\bibitem{Reid} M.~Reid, \emph{The Du Val singularities $A_n,D_n,E_6,E_7,E_8$}, author web page.
\bibitem{LafaVela} A.~Laface, M.~Velasco, \emph{A survey on Cox rings}, arxiv.
\bibitem{Wisn} J.A.~Wi\'sniewski, \emph{Toric Mori Theory and fano manifolds}, Semin Congr. 6 Soc. Math. France (2002), 249-272.

\end{thebibliography}
\end{document}